\documentclass[amstex, dvopdfmx, 11pt]{amsart}
\usepackage{graphicx}
\usepackage{amssymb}
\usepackage{mathrsfs}

\newtheorem{Pro}{Proposition}[section]
\newtheorem{lemma}{Lemma}[section]

\newtheorem{Mythm}{Theorem}

\theoremstyle{definition}

\newtheorem{Ex}{Example}[section]
\theoremstyle{definition}
\newtheorem{Rem}{Remark}[section]
\numberwithin{equation}{section}
\makeatletter
\@namedef{subjclassname@2020}{\textup{2020} Mathematics Subject Classification}
\makeatother

\begin{document}
\title[Thompson's groups and Teichm\"uller modular groups]{Thompson's groups and Teichm\"uller modular groups of generalized Cantor sets}
\author{
  Hiroshige Shiga    
}
\address{Emeritus Professor at Tokyo Institute of Technology (Science Tokyo) \\
and
Osaka Central Advanced Mathematical Institute \\
3-3-138\ Sugimoto, Sumiyoshi-ku \ Osaka 558-8585 \ Japan
} 
\email{shiga.hiroshige.i35@kyoto-u.jp}
\date{\today}    
\keywords{Thompson's group, Teichm\"uller space, Teichm\"uller modular group}
\subjclass[2020]{Primary 30F60; Secondary 30C62, 20F65}

\begin{abstract}
Thompson's groups, which are denoted by $F, T$ and $V$, were introduced by R. Thompson. It is known that they are related to various fields in mathematics.
In this paper, we establish that Thompson's groups can be regarded as subgroups of Teichm\"uller modular groups of Teichm\"uller spaces of generalized Cantor sets.
Moreover, Thompson's groups $F$ and $T$ act properly discontinuously on such Teichm\"uller spaces but Thompson's group $V$ does not.
In some sense, those results are improvements of the results of E. de Faria, F. P. Gardiner and W. J. Harvey on Thompson's group $F$ and asymptotic Teichm\"uller spaces.

We also show that Thompson's groups act on infinitely many Teichm\"uller spaces of generalized Cantor sets.
\end{abstract}
\maketitle
\section{Introduction}
Thompson's groups, which are denoted by $F, T$ and $V$, were introduced by R. Thompson in 1965.
Since their introduction, it has been noticed that these groups are related to various fields in mathematics.

In terms of asymptotic Teichm\"uller theory, E. de Faria, F. P. Gardiner and W. J. Harvey \cite{FariaGardinerHarvey} showed that Thompson's group $F$ is regarded as a subgroup of the asymptotic conformal mapping class group of a Riemann surface $R$ which is the complement of a Cantor set of bounded type.
 
In this paper, we focus on Teichm\"uller theory rather than the asymptotic theory, and we consider how to realize Thompson's groups $F, T$ and $V$ as subgroups of the Teichm\"uller modular group of a Riemann surface which is the complement of a generalized Cantor set.

Our realization, given in \S 4, is quite geometric in the quasiconformal sense.
Because of this, we may consider the action of Thompson's groups on Teichm\"uller spaces and discuss the discreteness of the action of those groups on the Teichm\"uller space.
In fact, we show that the action of Thompson's group $T$ (and $F$) is discrete, but the action of the group $V$ is not (Theorem \ref{Thm:Discrete Action}).
Since we consider the action of Thompson's groups on the Teichm\"uller space rather than the asymptotic Teichm\"uller space, we may say that our results improve the facts obtained in \cite{FariaGardinerHarvey} in a certain sense.

In the final section, we show that Thompson's groups act on infinitely many Teichm\"uller spaces of generalized Cantor sets.

\section{Preliminaries and main results}
\subsection{Thompson groups}
We will give a brief introduction of Thompson's groups and exhibit some fundamental properties. See \cite{CannonFloydParry} for more details.

A rational number is called a \emph{dyadic rational number} if it can be written as $\frac{k}{2^n}$ for some $k, n\in \mathbb N\cup\{0\}$.
Thompson's group $F$ is the set of self-homeomorphisms $f$ of $[0, 1]$ with $f(0)=0$ which are differentiable except at finitely many dyadic rational numbers and such that the derivatives are powers of $2$ on the intervals of differentiability.

For each $f\in F$, we may associate binary trees $D_f$ for the domain of $f$ and $R_f$ for the range as follows.

An interval in $[0, 1]$ is called a \emph{standard dyadic interval} if it has the form $[\frac{k}{2^n}, \frac{k+1}{2^n}]$ for some integers $n, k$ with $k\leq 2^n-1$.
From standard dyadic intervals, we may define a tree $\mathcal T$ called a \emph{tree of standard dyadic intervals}.

A vertex of $\mathcal T$ corresponds to a standard dyadic interval.
We denote a vertex by $v_I$ if it corresponds to a standard dyadic interval $I$.
The vertex corresponding to $[0, 1]$ is denoted by $v_0$ and we call it the \emph{root} of $\mathcal T$.
Two vertices $v_I$ and $v_J$ are joined by an edge $(I, J)$ if $J$ is either the left half or the right half of $I$.
The edge is called a left edge if $J$ is the left half and a right edge otherwise.
Hence, $\mathcal T$ is a binary tree rooted at $v_0$.

A finite subtree $S$ of $\mathcal T$ is called an \emph{ordered rooted binary tree} if $v_0\in S$ has two edges, and every vertex $v_I$ of $S$ with valence greater than $1$ has both the left and the right edges $(I, J)$ and $(I, J')$.
Vertices with valence $0$ are called \emph{leaves} of $S$.
The tree $S$ is trivial if $S=\{v_0\}$.

Let $v_{I_1}, \dots , v_{I_n}$ be the set of leaves of an ordered rooted binary tree $S$.
Then, $[0, 1]=\cup_{i=1}^{n}I_i$ and they give a partition of $[0, 1]$ via standard dyadic intervals.
Conversely, a partition of $[0, 1]$ via standard dyadic intervals gives an ordered rooted binary tree such that the set of the leaves corresponds to the partition.

For $f\in F$, there exist $0=x_0<x_1<\dots <x_n=1$ such that $f$ is differentiable in $(x_i, x_{i+1})$ $(i=0, 1, \dots n-1)$ and both $\{x_0, x_1, \dots , x_n\}$ and $\{0=f(x_0), f(x_1), \dots f(x_n)=1\}$ give partitions of $[0, 1]=f([0, 1])$ via standard dyadic intervals.
We denote by $D_f$ the ordered rooted binary tree for the partition by $\{x_0, x_1, \dots , x_n\}$ and  by $R_f$ for the partition by $\{0, f(x_1), \dots , 1\}$.
We put the number $n$ for the leaf $v_{[x_{n-1}, x_n]}$ in $D_f$ and for the leaf $f(v_{[x_{n-1}, x_n]})$ $(n\in \{1, \dots , n\})$.
We call the pair $(D_f, R_f)$ a \emph{tree diagram} for $f\in F$.
If $f, g\in F$ have the same tree diagram, then $f=g$.
On the other hand, for a given $f\in F$, a tree diagram for $f$ is not unique.
We identify two tree diagrams if they determine the same $f\in F$.

Since $f\in F$ is an increasing function, the numbers associated to the leaves of $R_f$ are located in the same order from left to right as those of $D_f$. 
Such a diagram $(D, R)$ is  called  an \emph{order preserving} tree diagram and the following is known.
\begin{Pro}
\label{Pro:Tree for F}
	Let $(D, R)$ be an order preserving tree diagram.
	Then, there exists $f\in F$ with a tree diagram $(D_f, R_f)$ such that $(D_f, R_f)=(D, R)$ as order preserving tree diagrams.
\end{Pro}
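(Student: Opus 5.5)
Given an order preserving tree diagram $(D,R)$, I build $f$ as the piecewise linear map matching leaves in order, then check the three defining properties of $F$ and that its tree diagram is $(D,R)$.

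\emph{Step 1: the leaves of $D$ and $R$ give two partitions with the same number of pieces.} Let $I_1,\dots,I_n$ be the standard dyadic intervals corresponding to the leaves of $D$, numbered from left to right. As observed before the statement, they partition $[0,1]$. Write $I_i=[x_{i-1},x_i]$ with $0=x_0<x_1<\dots<x_n=1$. The leaves of $R$ carry the same labels $1,\dots,n$, so $R$ also has $n$ leaves. Since the diagram is order preserving, the leaf labelled $i$ in $R$ is the $i$-th leaf from the left; call it $J_i=[y_{i-1},y_i]$, with $0=y_0<\dots<y_n=1$.

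\emph{Step 2: define $f$.} On each $I_i$, let $f$ be the unique increasing affine map of $I_i$ onto $J_i$. Write $I_i=[\frac{k}{2^{m}},\frac{k+1}{2^{m}}]$ and $J_i=[\frac{l}{2^{p}},\frac{l+1}{2^{p}}]$. Then the slope on $I_i$ is $2^{m-p}$, a power of $2$. The pieces agree at the breakpoints because $f(x_i)=y_i$ from both sides. Hence $f$ is a continuous, strictly increasing bijection $[0,1]\to[0,1]$, so a homeomorphism, with $f(0)=0$. It is differentiable except possibly at the points $x_1,\dots,x_{n-1}$, which are endpoints of standard dyadic intervals and therefore dyadic rationals. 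So $f\in F$.

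\emph{Step 3: the tree diagram of $f$ is $(D,R)$.} The points $\{x_i\}$ and $\{f(x_i)\}=\{y_i\}$ are exactly partitions of the kind used in the definition of $(D_f,R_f)$. By the bijection between partitions into standard dyadic intervals and ordered rooted binary trees, the trees attached to them are $D$ and $R$. The labels agree, since $f(I_i)=J_i$.

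The main subtlety is the well-definedness caveat in the definition. The breakpoint set $\{x_i\}$ chosen for $f$ need not be the minimal one, since some adjacent slopes may coincide. The paper, however, allows any such subdivision and identifies diagrams that determine the same element. So $(D,R)$ is a legitimate tree diagram for $f$, and this gives $(D_f,R_f)=(D,R)$ as order preserving tree diagrams.
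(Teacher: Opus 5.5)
Your proof is correct. The affine map of the $i$-th leaf interval of $D$ onto the $i$-th leaf interval of $R$ has slope $2^{m-p}$ and glues continuously, so it lies in $F$, and it admits $(D,R)$ (labels included) as one of its non-unique tree diagrams, exactly as you verify. The paper does not prove this proposition itself; it quotes it as known from Cannon--Floyd--Parry, and your piecewise-linear construction is the standard argument given there.
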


\begin{Ex}
	Consider maps $f_0, f_1\in F$ given by
	\begin{equation*}
		f_0(x)=\begin{cases}
			\frac{x}{2}, &0\leq x\leq \frac{1}{2} \\
			x-\frac{1}{4}, &\frac{1}{2}\leq x\leq\frac{3}{4} \\
			2x-1, &\frac{3}{4}\leq x\leq 1
		\end{cases}
		\quad f_1(x)=\begin{cases}
			x, &0\leq x\leq \frac{1}{2} \\
			\frac{x}{2}+\frac{1}{4}, &\frac{1}{2}\leq x\leq \frac{3}{4} \\
			x-\frac{1}{8}, &\frac{3}{4}\leq x\leq \frac{7}{8}\\
			2x-1, &\frac{7}{8}\leq x\leq 1
		\end{cases}
	\end{equation*}
Then, the following tree diagrams $(D_i, R_i)$ are associated to $f_i$ $(i=0, 1)$. 
\begin{figure}[htbp]
	\centering
	\includegraphics[width=12cm]{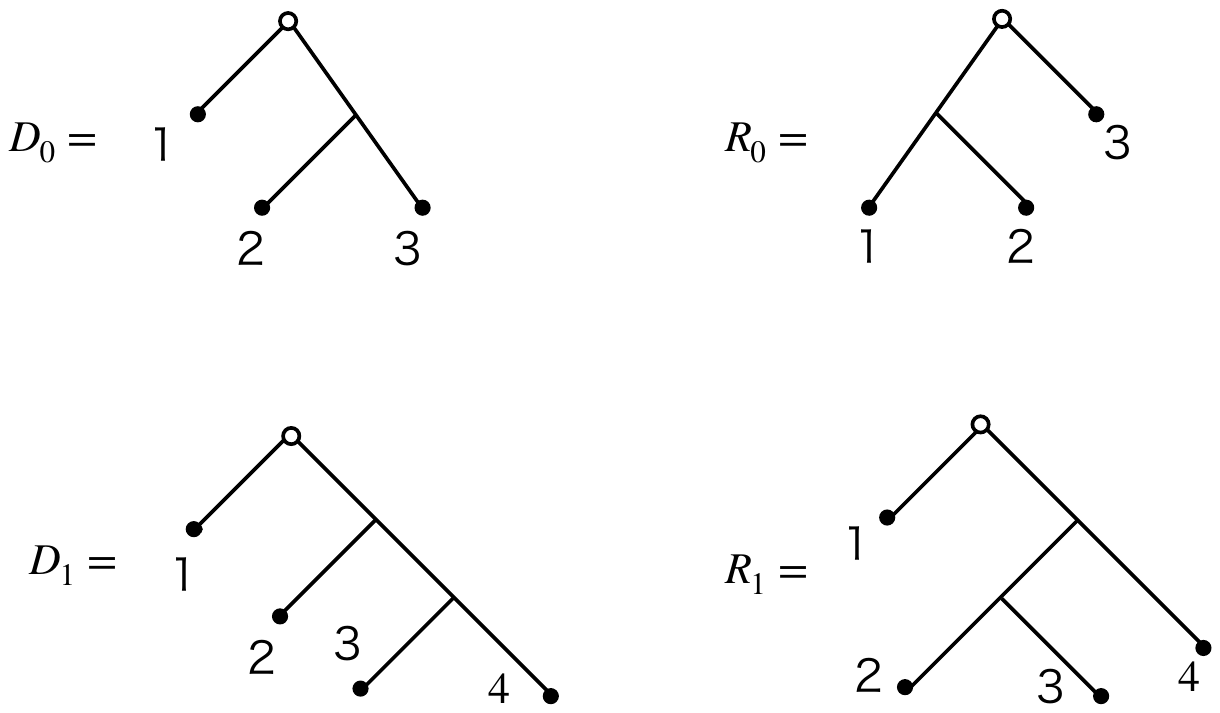}
	\caption{Group F}
	\label{Fig.GroupF}
\end{figure}
\end{Ex}

The mappings $f_0$ and $f_1$ are special ones. 
Actually, the following is known:
\begin{Pro}
\label{Pro:Generator F}
	Thompson's group $F$ is generated by $f_0$ and $f_1$.
\end{Pro}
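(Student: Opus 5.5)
The plan is the classical Cannon--Floyd--Parry argument, run through tree diagrams and Proposition \ref{Pro:Tree for F}. Let $G=\langle f_0,f_1\rangle\subset F$. For $n\ge 1$ define inductively $f_{n+1}=f_0^{-1}f_nf_0$. Conjugation by $f_0$ rescales the right half of $[0,1]$, so $f_n$ is the map that is the identity on $[0,1-2^{-n}]$ and acts on $[1-2^{-n},1]$ as a rescaled copy of $f_0$. In tree language, the diagram of $f_n$ is obtained by going down the right spine of $\mathcal T$ for $n$ steps and performing there the elementary rotation $((A,B),C)\mapsto (A,(B,C))$ seen in the diagram of $f_0$. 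All $f_n$ lie in $G$.

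The key step is to show that every order preserving diagram $(D,R)$ with the same number of leaves can be written as a product of the $f_n^{\pm1}$. Since $(D,R)=(D,T_k)\circ(T_k,R)$, where $T_k$ is the \emph{right vine} with $k$ leaves (every left child is a leaf), it suffices to treat a pair $(D,T_k)$; the other factor is the inverse of a pair of the same kind.

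I would argue by induction on a complexity of $D$, namely the total number of left edges lying on nonleaf vertices, or equivalently the sum over leaves of the number of left edges on the path from $v_0$. If $D\neq T_k$, there is a vertex $v$ on the right spine, at depth $n$, whose left child is not a leaf. A single right rotation at $v$ is realised, after expanding diagrams to a common refinement (which is allowed because diagrams are identified when they define the same element of $F$), by composing with $f_n$ or $f_n^{-1}$, and it strictly decreases the complexity. Repeating this, we find $g\in G$ with $g\circ(D,T_k)$ having range and domain both equal to $T_k$, hence equal to the identity. Finally Proposition \ref{Pro:Tree for F} and the uniqueness of the element of $F$ determined by a diagram give $f\in G$ for every $f\in F$.

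The main obstacle is the bookkeeping in the reduction step. One has to check carefully that composing with $f_n$ acts on a tree diagram exactly as a rotation at depth $n$ on the right spine, including when $D$ must first be refined by adding carets so that the diagrams are composable. One also has to check that rotations at vertices \emph{off} the right spine are unnecessary. For the latter I would reduce everything to the spine: rotate the whole left subtree of the spine vertex up into the spine one caret at a time. That never requires interior rotations, because the operation acts on entire subtrees.
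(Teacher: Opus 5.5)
The paper does not prove this proposition; it quotes it from Cannon--Floyd--Parry. Your argument is essentially their proof and is correct: you factor $(D,R)$ through the right vine $T_k$, and you realise each right rotation at a depth-$n$ spine vertex by $f_n$, the rescaled copy of $f_0$ on $[1-2^{-n},1]$, which lies in $\langle f_0,f_1\rangle$.

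Two cosmetic fixes are worth making.

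\begin{enumerate}
\item Your composition convention is mixed. The recursion $f_{n+1}=f_0^{-1}f_nf_0$ is right-to-left, but $(D,R)=(D,T_k)\circ(T_k,R)$ is left-to-right, so pick one.
\item Run the induction on the leaf sum of left edges, which drops by the number of leaves of $A$ under $((A,B),C)\mapsto(A,(B,C))$. Your first measure, read as the number of left edges ending at interior vertices, does not drop when $B$ is not a leaf, and the two measures are not equivalent under that reading.
\end{enumerate}
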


Next, we define Thompson's group $T$.
The unit circle $S^1$ is obtained from $[0, 1]$ by identifying the endpoints.
In other words, $S^1$ is the quotient space of the real line under the action of $x\mapsto x+n$ $(n\in \mathbb Z)$.
Thompson's group $T$ is the set of self-homeomorphisms $f$ of $\mathbb R$ with $f(x+n)=f(x)+n$ $(n\in \mathbb Z)$ which are differentiable except at finitely many dyadic rational numbers in $[0, 1]$ and such that the derivatives are powers of $2$ on the intervals of differentiability.
Obviously, $F$ is considered as a subgroup of $T$.

We may associate a tree diagram $(D_f, R_f)$ for $f\in T$ in the same way as for $F$.
We also give a number to each leaf of $D_f$ and $R_f$ in the same way as for $F$.
Because of the construction of $f\in T$, the order of the numbers associated to the leaves of $R_f$ may be a cyclic permutation of that of $D_f$.
So, we say that such a tree diagram $(D, R)$ is  \emph{positively ordered}.

\begin{Pro}
\label{Tree for T}
	Let $(D, R)$ be a positively ordered tree diagram.
	Then, there exists $f\in T$ with a tree diagram $(D_f, R_f)$ such that $(D_f, R_f)=(D, R)$ as positively ordered tree diagrams. 
\end{Pro}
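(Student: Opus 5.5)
The plan is to build $f$ by hand, as a piecewise linear map determined by the leaves, and then check that it lies in $T$ and has the prescribed tree diagram.

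\textbf{Setting up the data.} Let $D$ have leaves $I_1,\dots ,I_n$, listed from left to right. Write $I_i=[a_{i-1},a_i]$ with $0=a_0<a_1<\dots<a_n=1$. Let $R$ have leaves $J_1,\dots ,J_n$, also listed from left to right. Because $(D,R)$ is positively ordered, the numbering gives a bijection $\sigma$ with
\[
\sigma(i)\equiv i+k \pmod n
\]
for some fixed $k$. The leaf of $D$ labelled $i$ is sent to the leaf of $R$ carrying the same label, namely $J_{\sigma(i)}$.

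\textbf{Definition on $[0,1]$.} On each $I_i$ let $f$ be the unique increasing affine map of $I_i$ onto $J_{\sigma(i)}$. Since $I_i$ and $J_{\sigma(i)}$ are standard dyadic intervals, of lengths $2^{-m}$ and $2^{-m'}$, the slope is $2^{m-m'}$, a power of $2$. The breakpoints $a_i$ are dyadic rationals.

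\textbf{Extension to $\mathbb R$.} The cyclic order of the leaves of $R$ starts at $J_{\sigma(1)}=J_{k+1}$. Write $J_{k+1}=[b,b']$, where $b$ is dyadic. Set
\[
\tilde f(x)=f(x)\quad\text{for } x\in[0,1]\text{ with } f(x)\ge b,
\]
and
\[
\tilde f(x)=f(x)+1\quad\text{otherwise}.
\]
Then extend by $\tilde f(x+m)=\tilde f(x)+m$ for $m\in\mathbb Z$.

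\textbf{Verification.} We must show that $\tilde f$ is an increasing homeomorphism of $\mathbb R$ with the required periodicity. The leaves $I_1,\dots,I_n$ go in order to
\[
J_{k+1},\dots ,J_n,\ J_1+1,\dots ,J_k+1,
\]
and these consecutive intervals tile $[b,b+1]$. Hence $\tilde f$ is continuous and increasing on $[0,1]$, with $\tilde f(0)=b$ and $\tilde f(1)=b+1$. This forces the translated copies to glue continuously at the integers, so $\tilde f$ is a homeomorphism of $\mathbb R$ satisfying $\tilde f(x+m)=\tilde f(x)+m$.

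Differentiability holds away from the dyadic points $a_i$, together with the images of the integers. On each interval of differentiability the derivative is a power of $2$, so $\tilde f\in T$.

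Finally, the partition $\{a_i\}$ of the domain gives $D$. The image intervals, reduced mod $1$, are exactly the leaves $J_j$ of $R$, with the labels matched as prescribed. So $(D_{\tilde f},R_{\tilde f})=(D,R)$.

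\textbf{Main obstacle.} The delicate point is the bookkeeping of the cyclic shift. We have to confirm that lifting the image intervals by $+1$ after the wrap-around yields a monotone continuous map compatible with the periodicity. We also have to confirm that the tree diagram read off from $\tilde f$, with its labels, agrees with the given one up to the identification of diagrams defining the same element. Everything else reduces to the same affine-piece construction that proves Proposition \ref{Pro:Tree for F}; indeed, the case $k=0$ is exactly that proposition.
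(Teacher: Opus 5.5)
Your construction is correct. It is the standard argument: send each leaf of $D$ affinely onto the equally labelled leaf of $R$, then lift the wrapped-around pieces by $+1$. The paper gives no proof of its own and quotes the statement as known, with \cite{CannonFloydParry} as its general reference.

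One small fix is needed. As written, the rule ``$\tilde f(x)=f(x)$ when $f(x)\ge b$'' on the closed interval $[0,1]$ gives $\tilde f(1)=b$ when $k\ge 1$, because $f(1)$ is the right endpoint of $J_k$, which is $b$. This contradicts $\tilde f(1)=\tilde f(0)+1=b+1$. Define instead $\tilde f=f$ on $I_1\cup\dots\cup I_{n-k}$ and $\tilde f=f+1$ on $I_{n-k+1}\cup\dots\cup I_n$ (or define $\tilde f$ only on $[0,1)$), and then extend periodically.
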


\begin{Ex}
	Consider a map $f_2\in T$ on $\mathbb R$ given by
	\begin{equation*}
		f_2(x)=\begin{cases}
			\frac{x}{2}+\frac{3}{4}, &0\leq x\leq \frac{1}{2} \\
			2x, &\frac{1}{2}\leq x\leq \frac{3}{4} \\
			x+\frac{3}{4}, &\frac{3}{4}\leq x\leq 1 \\
			f_2(x-n)+n, & n\leq x\leq n+1\quad (n\not=0)
		\end{cases}
	\end{equation*}
	Then, the following tree diagram $(D_2, R_2)$ is associated to $f_2$.
	\begin{figure}[htbp]
		\centering
		\includegraphics[width=12cm]{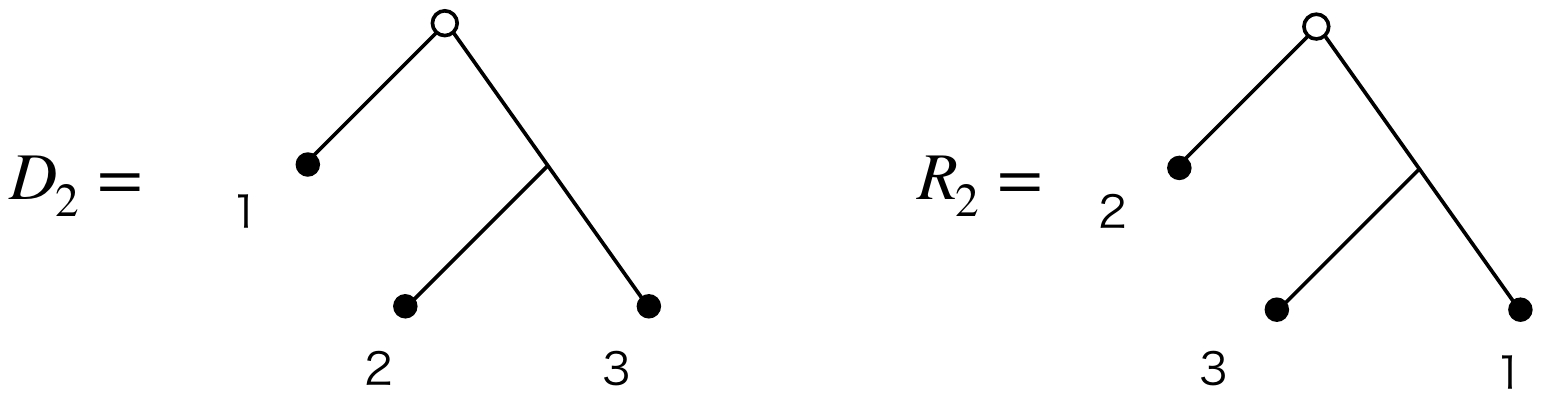}
		\caption{Group T}
		\label{Fig.GroupT}
	\end{figure}
\end{Ex}
It is known that the map $f_2$ generates Thompson's group $T$ together with $f_0$ and $f_1$.
\begin{Pro}
\label{Pro:Generator T}
	Thompson's group $T$ is generated by $f_0, f_1$ and $f_2$.
\end{Pro}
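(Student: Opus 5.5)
The plan is to reduce an arbitrary $f\in T$ to an element of $F$ by composing with a suitable power of $f_2$, and then invoke Proposition \ref{Pro:Generator F}.

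\textbf{Effect of $f_2$ on leaf labels.} First I will record what $f_2$ does to the cyclic labelling of leaves. From its formula, $f_2$ maps the three intervals $[0,\frac12]$, $[\frac12,\frac34]$, $[\frac34,1]$ onto $[\frac34,1]$, $[0,\frac12]$, $[\frac12,\frac34]$ respectively, working modulo $1$. So its tree diagram $(D_2,R_2)$ is a cyclic rotation by one step on a three-leaf tree.

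\textbf{Key lemma.} For any positively ordered diagram with a given domain tree $D$ and range tree $R$, with $n$ leaves each, the elements of $T$ with that diagram differ only by which leaf of $R$ receives label $1$. Hence they differ by post-composition with an element $r\in T$ that maps the leaves of $R$ to themselves and cyclically rotates them by one position. I then need to show that this rotation $r$ lies in $\langle f_0,f_1,f_2\rangle$. Equivalently, it is enough to produce, for each $f\in T$, some $g\in\langle f_0,f_1,f_2\rangle$ with $g(f(0))=0$ modulo $1$. In that case $g\circ f$ fixes $0$, is an increasing self-homeomorphism of $[0,1]$ with dyadic breakpoints and power-of-two slopes, and so lies in $F=\langle f_0,f_1\rangle$.

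\textbf{Main step: transitivity on dyadic rationals.} Since $f(0)$ is a dyadic rational, the claim becomes that $\langle f_0,f_1,f_2\rangle$ acts transitively on the dyadic rationals in $S^1$, in particular moving any dyadic point to $0$. I will do this in two parts.

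\emph{Step (a).} By Proposition \ref{Pro:Tree for F}, $F$ acts transitively on the dyadic rationals in the open interval $(0,1)$. Given dyadic $0<a,b<1$, choose standard dyadic partitions with the same number of intervals that have $a$ and $b$ respectively as breakpoints at the same index. The resulting order-preserving diagram realizes an $f\in F$ with $f(a)=b$.

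\emph{Step (b).} By the formula, $f_2(\frac12)=1\equiv 0$. So for a dyadic point $p\neq 0$, I first use step (a) to move $p$ to $\frac12$ by an element of $F$, and then apply $f_2$ to send it to $0$. Hence every dyadic $p$ can be sent to $0$ by an element of $\langle f_0,f_1,f_2\rangle$, which completes the argument.

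I expect the main obstacle to be step (a): producing the compatible dyadic partitions and verifying that the diagram realizes a genuine element of $F$. This is handled by refining both partitions until their interval counts agree, since adding a caret increases the count by one on either side.
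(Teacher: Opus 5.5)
Your argument is correct. It differs from the paper, which gives no proof of this proposition and quotes it as known from Cannon--Floyd--Parry.

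\textbf{The standard route.} The usual proof in that reference is combinatorial. One uses $F$ to reduce an arbitrary positively ordered diagram to a power of a cyclic leaf rotation of a fixed ``vine'' tree. The three-leaf rotation is $f_2$, the four-leaf one is $f_0^{-1}\circ f_2\circ f_1$, and so on. One then checks inductively that consecutive rotations differ by elements of $F$; for instance $f_2=f_1\circ(f_0^{-1}\circ f_2\circ f_1)$. Hence all the rotations lie in $\langle f_0,f_1,f_2\rangle$.

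\textbf{Your route.} You replace this with an orbit--stabilizer argument. $F$ is exactly the stabilizer of $0$ in $T$, it acts transitively on the dyadic points of $(0,1)$, and $f_2(\frac12)\equiv 0$ joins that orbit to $0$. A subgroup that contains the stabilizer of $0$ and is transitive on the orbit of $0$ is all of $T$.

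\textbf{What each buys.} Your route is shorter and uses only Propositions~\ref{Pro:Tree for F} and~\ref{Pro:Generator F}. It also proves more: since $T$ acts $2$-transitively on dyadic points, $F$ together with \emph{any} element of $T$ not fixing $0$ generates $T$, i.e.\ $F$ is a maximal subgroup of $T$. The combinatorial route instead produces explicit words and relations, such as $f_2\circ f_0=(f_0^{-1}\circ f_2\circ f_1)^2$ and $f_2^3=\textrm{id}$ on $S^1$. These are what one needs for a finite presentation of $T$; your argument yields none.

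\textbf{Two minor points.} First, your ``key lemma'' paragraph about the rotation $r$ is never used. It is also slightly off: elements with the same pair of trees differ by a \emph{power} of that rotation, not necessarily by $r$ itself. The only reduction you need is that $g\circ f$ fixes $0$ and hence lies in $F$, so that paragraph can be deleted.

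Second, you rightly work on $S^1$, as the paper's tree-diagram description of $T$ intends. Under the paper's literal definition by lifts to $\mathbb R$, $g\circ f$ would only send $0$ to some integer. You would then also need $\tau(x)=x+1$ in the subgroup. It is there: extending $f_0,f_1$ by $f(x+n)=f(x)+n$, a piecewise check gives $(f_0^{-1}\circ f_2\circ f_1)^2=\tau\circ f_2\circ f_0$ on $\mathbb R$.
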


Finally, we introduce Thompson's group $V$. While the definition of $V$ is similar to that of $T$, the group $V$ consists of right-continuous functions on standard dyadic intervals.
That is, Thompson's group $V$ consists of self-mappings $f$ of $\mathbb R$ with $f(x+n)=f(x)+n$ $(n\in \mathbb Z)$ satisfying
\begin{enumerate}
	\item in $[0, 1]$, $f$ is differentiable except at finitely many dyadic rational numbers, where $f$ is right-continuous;
	\item the derivative of $f$ on the intervals of differentiability is a power of $2$.
\end{enumerate}

Thompson's group $V$ contains Thompson's group $T$ as a subgroup, and a tree diagram $(D_f, R_f)$ is associated with $f\in V$ in the same way as for $T$.
Unlike tree diagrams for $F$ and $T$, each leaf of $D_f, R_f$ for $f\in V$ has just a number.
We call such a tree diagram a \emph{numbered diagram}.

\begin{Pro}
	Let $(D, R)$ be a numbered tree diagram.
	Then, there exists $f\in V$ with a tree diagram $(D_f, R_f)$ such that $(D_f, R_f)=(D, R)$ as numbered tree diagrams.
\end{Pro}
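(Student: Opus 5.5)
Given a numbered tree diagram $(D,R)$, I will build $f$ directly as a piecewise affine map defined leaf by leaf, and then check that it lies in $V$ and has $(D,R)$ as a tree diagram.

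\emph{Setup.} Let the leaves of $D$ be $v_{I_1},\dots,v_{I_n}$ and the leaves of $R$ be $v_{J_1},\dots,v_{J_n}$. Each $I_i$ and $J_j$ is a standard dyadic interval, and each family gives a partition of $[0,1]$. The numbering is a bijection between the two leaf sets, so it determines a permutation $\sigma$ of $\{1,\dots,n\}$: the leaf $I_i$ carries the same number as $J_{\sigma(i)}$. Write $I_i=[a_i,b_i]$ and $J_{\sigma(i)}=[c_i,d_i]$. By definition of standard dyadic intervals, $b_i-a_i=2^{-m_i}$ and $d_i-c_i=2^{-k_i}$ for some integers $m_i,k_i$.

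\emph{Definition of $f$.} On the half-open interval $[a_i,b_i)$ set
\[
f(x)=2^{m_i-k_i}(x-a_i)+c_i ,
\]
and extend to all of $\mathbb R$ by $f(x+n)=f(x)+n$ for $n\in\mathbb Z$.

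\emph{Verification.} We must check the following points.
\begin{enumerate}
\item On each open interval $(a_i,b_i)$, $f$ is affine with derivative $2^{m_i-k_i}$, which is a power of $2$.
\item The breakpoints $a_i$ are dyadic rationals, finite in number in $[0,1]$, and $f$ is right-continuous there by construction.
\item $f$ maps $[a_i,b_i)$ bijectively onto $[c_i,d_i)$. Since the $J_j$ partition $[0,1]$, the images tile $[0,1)$, so $f$ is a well-defined bijection of $[0,1)$. The periodic extension is then a bijection of $\mathbb R$ satisfying the required equivariance.
\end{enumerate}
Hence $f\in V$.

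\emph{Tree diagram of $f$.} The leaves $I_i$ of $D$ give a partition on whose pieces $f$ is differentiable. The leaf intervals of $R$ are exactly the images $f(I_i)$, and $f(I_i)$ is the leaf $J_{\sigma(i)}$, which carries the same number as $I_i$. Therefore $(D,R)$ is a tree diagram $(D_f,R_f)$ for $f$ as a numbered diagram.

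The only care needed is bookkeeping at the endpoints. Right-continuity means each piece must be the half-open interval $[a_i,b_i)$, with the point $1$ handled by periodicity, so that $f$ is a genuine bijection and not a map that is two-valued at the breakpoints. There is no real obstacle beyond this. The same construction with $\sigma$ order preserving, or a cyclic shift, recovers Proposition~\ref{Pro:Tree for F} and Proposition~\ref{Tree for T}.
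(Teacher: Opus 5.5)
Your proof is correct. Mapping each half-open leaf interval $[a_i,b_i)$ affinely, with slope $2^{m_i-k_i}$, onto the half-open leaf interval $[c_i,d_i)$ that carries the same number gives a right-continuous bijection of $[0,1)$. Its extension by $f(x+n)=f(x)+n$ satisfies the paper's definition of $V$, and $(D,R)$ is literally a tree diagram for it.

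The paper itself gives no proof of this proposition. It is quoted as a known fact, with a pointer to Cannon--Floyd--Parry, like the companion statements for $F$ and $T$. Your leaf-by-leaf affine construction is the standard argument behind that citation, so your version simply makes the statement self-contained.

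One correction to your closing remark, which lies outside the statement. Take a cyclic shift $\sigma(i)\equiv i+s \pmod n$ with $0<s<n$. Your literal formula then jumps from $1$ down to $0$ at the wrap-around point $a_{n-s+1}$, and up by $1$ at $x=1$. So it is not a homeomorphism of $\mathbb R$, which is what the paper's definition of $T$ requires. To fix this, add $1$ to $f$ on $[a_{n-s+1},1)$ before extending by $f(x+n)=f(x)+n$; that is, take the continuous lift of the induced circle map. For $F$, where $\sigma=\mathrm{id}$, the remark holds as stated.
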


\begin{Ex}
	Consider a map $f_3\in V$ on $\mathbb R$ given by
	\begin{equation*}
		f_3(x)=\begin{cases}
			\frac{x}{2}+\frac{1}{2}, &0\leq x<\frac{1}{2}\\
			2x-1, &\frac{1}{2}\leq x<\frac{3}{4} \\
			x, &\frac{3}{4}\leq x<1 \\
			f_3(x-n)+n, & n\leq x<n+1 (n\not=0).
		\end{cases}
	\end{equation*}
	The following tree diagram $(D_3, R_3)$ is associated to $f_3$.
	\begin{figure}[htbp]
		\centering
		\includegraphics[width=12cm]{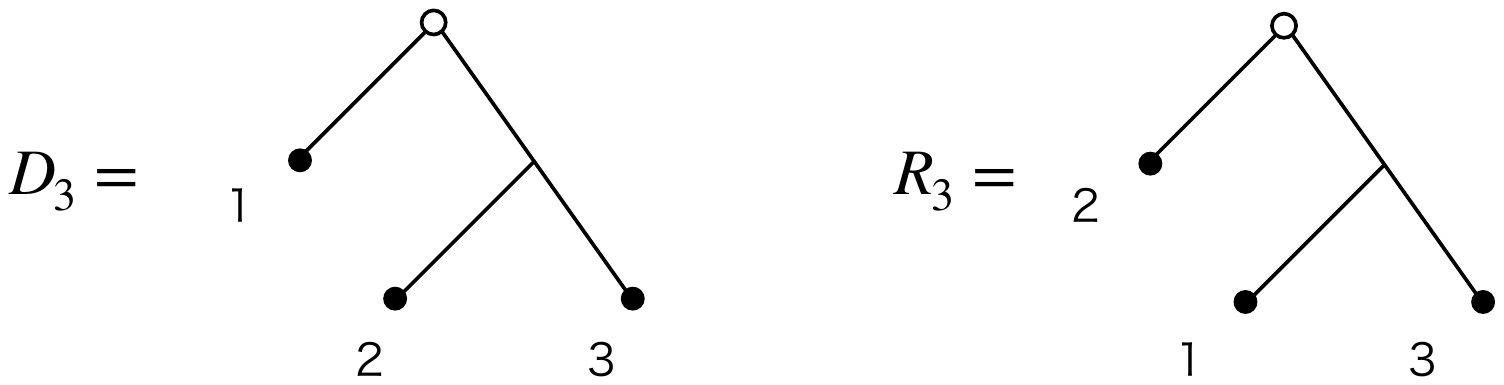}
		\caption{Group V}
		\label{Fig.GroupV}
	\end{figure}
\end{Ex}
It is known that the map $f_3$ is also a generator of $V$.
\begin{Pro}
\label{Pro:Generator V}
	Thompson's group $V$ is generated by $f_0, f_1, f_2$ and $f_3$.
\end{Pro}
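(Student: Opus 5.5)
The plan is to reduce every $f\in V$ to an element of $T$ composed with permutations of the leaves of a single fixed tree. We then show that such permutations lie in $\langle f_0,f_1,f_2,f_3\rangle$. By Proposition \ref{Pro:Generator T}, the subgroup $T$ is already generated by $f_0,f_1,f_2$, so only the permutation part needs work.

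Let $f\in V$ have numbered tree diagram $(D_f,R_f)$ with $n$ leaves. Choose the positively (indeed order preserving) ordered diagram $(D_f,R_f')$, where $R_f'$ is $R_f$ with its leaves renumbered $1,\dots,n$ from left to right. By Proposition \ref{Pro:Tree for F} it gives an element $g\in F\subset T$. Then $f\circ g^{-1}$ has diagram $(R_f,R_f)$ with some permutation $\sigma\in S_n$ of the labels. It therefore suffices to show that every element of the form $(S,S,\sigma)$, for an ordered rooted binary tree $S$ and $\sigma\in S_n$, lies in $G=\langle f_0,f_1,f_2,f_3\rangle$.

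Since $S_n$ is generated by the transpositions $(i\ i+1)$ of adjacent leaves, it suffices to realize such an adjacent transposition. Two steps do this.

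First, conjugation by elements of $T$ transports diagrams. If $h\in T$ maps the leaf intervals $I_i,I_{i+1}$ of $S$ affinely onto $[0,\frac12)$ and $[\frac12,\frac34)$, then $h\circ(S,S,(i\ i+1))\circ h^{-1}$ is the map swapping $[0,\frac12)$ and $[\frac12,\frac34)$ by affine maps, and the identity elsewhere. Such an $h$ exists by Proposition \ref{Tree for T}. Take a common refinement so that $I_i,I_{i+1}$ become leaves mapping to the two intervals above. When $i=n$ and the transposition is cyclic, use the cyclic freedom of $T$.

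Second, the basic swap must be expressed via $f_3$. Looking at its diagram, $f_3$ sends $[0,\frac12)\to[\frac12,\frac34)$ and $[\frac12,\frac34)\to[0,\frac12)$, and fixes $[\frac34,1)$. This is exactly the basic swap, where the two pieces have different lengths.

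Swaps of unequal intervals suffice, since the labelled-permutation element does not depend on interval sizes once the trees are fixed. Concretely, for general $S$ we may take $R_f=S$, and the element is determined by which leaf of $D$ goes to which leaf of $R$. The conjugating $h$ from the first step is chosen with domain tree $S$ and range tree $D_3$ (refined compatibly). So I would make sure the conjugation uses the diagram $(D_3,R_3)$ in the refined form. The main obstacle is this bookkeeping: after conjugating, the result must be a diagram whose permutation is an adjacent transposition and nothing else. That requires refining trees carefully, splitting leaves other than $I_i,I_{i+1}$ identically in domain and range, so that the conjugate equals $(S,S,(i\ i+1))$ exactly. A general form of the identity $h^{-1}f_3h$ with $h\in T$ should handle this.

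Combining the steps, $f=(f\circ g^{-1})\circ g$ is a product of conjugates of $f_3$ by elements of $T$ and an element of $F$. Hence $V=\langle f_0,f_1,f_2,f_3\rangle$.
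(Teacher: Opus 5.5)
The paper gives no proof of this proposition; it quotes it as a known fact and points to Cannon--Floyd--Parry. Your argument is correct and is the standard one: you split off an element of $F$, reduce to leaf permutations $(S,S,\sigma)$ of a fixed tree, generate these by adjacent transpositions, and show that each adjacent transposition is conjugate to $f_3$ by an element of $T=\langle f_0,f_1,f_2\rangle$.

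Two small remarks.

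First, expand $(D_f,R_f)$ beforehand, splitting a leaf in both trees, so that $n\ge 3$. For $n=2$ no bijection $h$ can carry $I_1,I_2$ onto $[0,\frac12)$ and $[\frac12,\frac34)$. (The cyclic transposition is not needed, since $(i\ i+1)$ with $1\le i\le n-1$ already generate $S_n$.)

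Second, the bookkeeping you flag as the main obstacle is not one, and no refinement is needed. Take $h\in T$ with domain tree $S$ and range tree $D_3$, where the leaf $[\frac34,1]$ is subdivided into $n-2$ standard dyadic intervals. Match leaves by the cyclic shift: $I_i\mapsto[0,\frac12)$, $I_{i+1}\mapsto[\frac12,\frac34)$, and $I_{i+2},\dots,I_{i-1}$ (indices mod $n$) onto the pieces of $[\frac34,1)$ in order. This diagram is positively ordered, so $h\in T$ by Proposition \ref{Tree for T}.

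A pointwise check then gives $h\,(S,S,(i\ i+1))\,h^{-1}=f_3$:
\begin{enumerate}
\item on $[0,\frac12)$ it is the composition of three increasing affine maps, hence the increasing affine map onto $[\frac12,\frac34)$;
\item on $[\frac12,\frac34)$ it is the inverse of that map;
\item on $[\frac34,1)$ it is $h\circ\mathrm{id}\circ h^{-1}$, the identity.
\end{enumerate}
Hence $(S,S,(i\ i+1))=h^{-1}f_3h$ directly.
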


\subsection{Generalized Cantor sets and the tree}
For $\omega=(q_n)_{n=1}^{\infty}\in \Omega :=(0, 1)^{\mathbb N}$, we may define a generalized Cantor set $E(\omega)$ on $[0, 1]$.

The construction of $E(\omega)$ is similar to that of the standard middle one-third Cantor set.
We start from the unit interval $I_0=[0, 1]$.
At the first step, we remove an open interval $J_1^1$ of length $q_1=q_1|I_0|$ so that the remaining intervals $I_1^1$, $I_1^2$ have the same length, where the closed interval $I_1^1$ is located to the left of $I_1^2$.
We put $E_1(\omega)=I_1^1\cup I_1^2$.
We continue this process inductively to obtain $E_k(\omega)=\cup_{j=1}^{2^k}I_k^j$; for each closed interval $I_{k-1}^j$ $(j\in \{1, 2, \dots , 2^{k-1}\})$, we remove an open interval $J_k^{2j-1}$ of length $q_k|I_{k-1}^j|$ so that $I_{k-1}^j\setminus J_k^{2j-1}$ consists of two closed intervals $I_k^{2j-1}$, $I_k^{2j}$ of the same length, where $I_k^{2j-1}$ is located to the left of $I_k^{2j}$.
We put $E_k(\omega)=\cup_{j=1}^{2^k}I_k^j$, which is the union of $2^k$ closed intervals of the same length and $[0, 1]\setminus E_k(\omega)$ consists of $2^k-1$ open intervals $J_k^1, \dots , J_k^{2^k-1}$.
The interval $J_k^{j}$ is located between $I_k^{j}$ and $I_k^{j+1}$.
Then, $E(\omega):=\cap_{k=1}^{\infty}E_k(\omega)$ is a Cantor set on $[0, 1]$ and we call it the generalized Cantor set for $\omega\in \Omega$.


\noindent
{\bf The canonical pants decomposition (cf. \cite{ShigaStructure}).}
We put $X(\omega)={\mathbb C}\setminus E(\omega)$ for $\omega=(q_n)_{n=1}^{\infty}\in \Omega:=(0, 1)^{\mathbb N}$.
The surface $X(\omega)$ is a hyperbolic Riemann surface of infinite type.
We define a pants decomposition of $X(\omega)$.

First, we take simple closed geodesics $\gamma_1^{j}$ surrounding $I_1^j$ $(j=1, 2)$ so that $\gamma_1^1$ and $\gamma_1^2$ together with $\{\infty\}$ give a pair of pants $P_0^1$ in $X(\omega)$. 

Next, we consider $E_2(\omega)=\cup_{j=1}^4 I_2^j$ and take circles $C_2^j$ in $X(\omega)$  surrounding $I_2^{2j-1}$ and $I_2^{2j}$ $(j=1, 2)$.
By considering the geodesics $\gamma_2^j$ in $X(\omega)$ homotopic to $C_2^j$,  we find two pairs of pants, $P_1^1$ and $P_1^2$ so that $\partial P_1^1=\gamma_1^1\cup\gamma_2^1\cup\gamma_2^2$ and $\partial P_1^2=\gamma_1^2\cup\gamma_2^3\cup\gamma_2^4$.

Inductively, we construct a pair of pants $P_k^j$ for $k\in \mathbb N$ and $j\in \{1, 2, \dots 2^k\}$.
Each $P_k^j$ is bounded by geodesics $\gamma_k^j, \gamma_{k+1}^{2j-1}$ and $\gamma_{k+1}^{2j}$, and $\{P_k^j\}_{k\in \mathbb Z_{\geq 0} , j=1, 2, \dots , 2^k}$ gives a pants decomposition of $X(\omega)$.
We call it \emph{the canonical pants decomposition} of $X(\omega)$.
Each geodesic $\gamma_k^j$ is called a \emph{pants geodesic of depth $k$} and a simple closed curve homotopic to $\gamma_k^j$ is called a \emph{pants curve of depth $k$}.

From the canonical pants decomposition of $X(\omega)$, we may construct a binary tree $\mathcal T_{C}$ as follows.
The vertices of $\mathcal T_{C}$ correspond to pants geodesics and $\{\infty\}$.
We take the vertex $v_{\infty}$ corresponding to $\{\infty\}$ as the root of the tree.
The vertex corresponding to the pants geodesic $\gamma_{k}^j$ is denoted by $v_{k}^{ j}$.
The root vertex $v_{\infty}$ is joined with $v_1^1$ and $v_1^2$.
The vertex $v_k^j$ is joined with $v_{k+1}^{2j-1}$ and $v_{k+1}^{2j}$ $(k\in \mathbb N; j\in\{1, 2, \dots 2^k\})$.
The vertices $v_k^j$ and $v_{k+1}^{2j-1}$ are joined by a left edge, and the vertices $v_k^j$ and $v_{k+1}^{2j}$ are joined by a right edge.

Thus, we obtain a binary tree $\mathcal T_C$ rooted at $v_{\infty}$.
It is not hard to see that the tree $\mathcal T_C$ is isomorphic to $\mathcal T$ as rooted binary trees.
Namely, there exists a tree isomorphism $\iota_{C} : \mathcal T_C\to \mathcal T$ such that $\iota_{C} (v_{\infty})=v_0, \iota_C (v_1^1)=v_{[0, \frac{1}{2}]}$ and 
$\iota_C (v_1^2)=v_{[\frac{1}{2}, 1]}$. 

Furthermore, for an unbounded subdomain $W\subset \mathbb C$ bounded by finitely many pants geodesics with $E(\omega)\subset \mathbb C\setminus \overline W$, we may associate a subtree $T_W$ of $\mathcal T_C$.
Since every pants geodesic contained in $W$ is a boundary curve of two pairs of pants, we may show the following.
\begin{Pro}
\label{Pro:Ordered rooted binary tree}
Let $W$ be an unbounded domain bounded by finitely many pants geodesics with $E(\omega)\subset \mathbb C\setminus \overline W$.
Then, $\iota_C (T_W)$ is an ordered rooted binary tree.
	Conversely, if $\iota_C(T)$ is an ordered binary tree for a subtree $T$ of $\mathcal T_C$, then there exists an unbounded domain $W$ in $\mathbb C$ bounded by pants curves such that $T=T_W$.
\end{Pro}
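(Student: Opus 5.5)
The plan is to identify $W$ with a finite union of pairs of pants of the canonical pants decomposition and read off the tree structure from the adjacency of pants. Here $T_W$ is understood as the subtree of $\mathcal T_C$ spanned by $v_\infty$ and the vertices $v_k^j$ with $\gamma_k^j\subset\overline W$.

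\emph{Structure of $W$.} The pants geodesics are pairwise disjoint and $W$ is bounded by finitely many of them. Hence every pair of pants $P_k^j$ either lies in $\overline W$ or meets $W$ only in the empty set. Thus $\overline W$ is a union of closed pairs of pants $\overline{P_k^j}$.

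Since $W$ is unbounded, it contains a punctured neighbourhood of $\infty$, so $P_0^1\subset W$. Since $E(\omega)\subset\mathbb C\setminus\overline W$, every $\gamma_k^j\subset\overline W$ has only finitely many pants of $\overline W$ below it. Therefore $W$ consists of finitely many pants, because it has finitely many boundary curves and is connected.

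The key observation concerns a pants geodesic $\gamma_k^j\subset\overline W$. It is either a boundary curve of $W$, or an interior curve, in which case both adjacent pants $P_{k-1}^{\lceil j/2\rceil}$ and $P_k^j$ lie in $W$.

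\emph{Forward direction.}
\begin{enumerate}
\item Upward closedness: if $P_k^j\subset W$ with $k\ge1$, then $\gamma_k^j$ is in the interior of $W$ or on its boundary. Connectedness of $W$ forces the parent pants to also lie in $W$, unless $W$ is cut at $\gamma_k^j$. But a component cut off there would be bounded, contradicting that $W$ is connected and contains $P_0^1$. Hence $T_W$ is a connected subtree containing the root.
\item The root has two edges, to $v_1^1$ and $v_1^2$, because $P_0^1\subset W$.
\item Interior curves: if $\gamma_k^j$ is interior to $W$, then $P_k^j\subset W$, so $\gamma_{k+1}^{2j-1},\gamma_{k+1}^{2j}\subset\overline W$. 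Thus $v_k^j$ has both its left and right children in $T_W$.
\item Boundary curves: if $\gamma_k^j\subset\partial W$, then $P_k^j\not\subset W$, so $v_k^j$ has no children and is a leaf.
\end{enumerate}
Together with finiteness, these show that $\iota_C(T_W)$ is an ordered rooted binary tree. Its leaves correspond exactly to the boundary geodesics of $W$.

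\emph{Converse.} Given a subtree $T$ with $\iota_C(T)$ an ordered rooted binary tree, let $W$ be the interior of
\[
\overline{P_0^1}\cup\bigcup\{\overline{P_k^j} : v_k^j\in T \text{ is not a leaf}\}.
\]
Each pants $P_k^j$ used is glued to its parent pants along $\gamma_k^j$, and the parent is also used because $T$ is a rooted subtree. Hence $W$ is connected and unbounded. Because every non-leaf vertex has both children in $T$, the geodesics $\gamma_k^j$ with $v_k^j$ a non-leaf vertex are interior to $W$, and $\partial W$ consists exactly of the leaf geodesics. Each closed pants $\overline{P_k^j}$ is disjoint from $E(\omega)$, and the union is finite, so $E(\omega)\subset\mathbb C\setminus\overline W$. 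Finally, one checks that $T_W=T$ by matching vertices: the geodesics in $\overline W$ are exactly those of the used pants.

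The main obstacle is the upward-closedness and connectedness bookkeeping in the forward direction. Specifically, one must show that $\overline W$ is exactly a union of pants forming a connected, parent-closed family, rather than some region containing a bounded component. I would handle this by using that a planar region bounded by disjoint Jordan curves with $\infty\in W$ lies on the outer side of each of its boundary curves.
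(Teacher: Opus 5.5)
Your proof is correct and follows the route the paper only hints at. The paper's sole justification is the remark that every pants geodesic inside $W$ bounds two pairs of pants, which is your key observation; you additionally supply the connectedness, upward-closure and converse arguments that the paper leaves implicit. The one step you state too tersely is finiteness: it follows from K\"onig's lemma, since an infinite parent-closed family of pants in $W$ would contain an infinite nested chain whose geodesics shrink to a point of $E(\omega)$, forcing that point into $\overline W$.
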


We may estimate the hyperbolic lengths of pants geodesics in $X(\omega)$ (cf. Kinjo \cite{Kinjo}, Shiga \cite{ShigaStructure} (5.2)).
\begin{Pro}
\label{Pro:Length to 0}
	Let $\gamma_d$ be a pants geodesic of depth $d$ in $X(\omega)$.
	Then,
	\begin{equation*}
		\ell_{X(\omega)}(\gamma_d)\leq \frac{2\pi^2}{\log\left (1+ \frac{2\delta}{1-q_d}\right )},
	\end{equation*}
	where $\ell_{R}(\gamma)$ stands for the hyperbolic length of a curve $\gamma\subset R$ with respect to the hyperbolic metric on a hyperbolic Riemann surface $R$ .
	In particular, $\lim_{d\to \infty}\ell_{X(\omega)}(\gamma_d)=0$ if $\lim_{n\to\infty} q_n=1$.
\end{Pro}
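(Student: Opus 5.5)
The plan is to bound the hyperbolic length of $\gamma_d=\gamma_d^j$ by the extremal length of its homotopy class, and then bound that extremal length using an explicit round annulus in $X(\omega)$ that separates $I_d^j$ from the rest of $E(\omega)$. The basic tool is Maskit's comparison: if $A\subset R$ is an annulus whose core curve is homotopic to a simple closed geodesic $\gamma$, then $\ell_R(\gamma)\le \pi/\mathrm{mod}(A)$. Here $\mathrm{mod}(A)=\frac{1}{2\pi}\log(r_2/r_1)$ for a round annulus $\{r_1<|z-c|<r_2\}$. It therefore suffices to find an annulus of modulus at least $\frac{1}{2\pi}\log\bigl(1+\frac{2\delta}{1-q_d}\bigr)$ (up to the constant), which gives the stated bound $2\pi^2/\log(\cdots)$.

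To build the annulus, I would first identify what $\gamma_d^j$ encloses. By construction it separates $E(\omega)\cap I_{d-1}^{j'}$ from the rest of $E(\omega)$, where $I_{d-1}^{j'}$ is the parent interval (this is the index convention for the circles $C_2^j$). Put $L=|I_{d-1}^{j'}|$ and let $c$ be its midpoint. The interval $I_{d-1}^{j'}$ consists of two children of length $\frac{(1-q_d)L}{2}$ each, separated by a gap of length $q_dL$.

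The neighbouring intervals of $E_{d-1}(\omega)$ lie at distance at least the gap removed at an earlier level, which has length $q_k|I_{k-1}|\ge q_k L$. I would interpret $\delta$ (not defined in the statement) as a lower bound of the form $\delta\le \inf_k q_k$, or as the relative distance $\mathrm{dist}(I_{d-1}^{j'},E\setminus I_{d-1}^{j'})/L$; Kinjo's and Shiga's (5.2) convention should be checked. Then the disk $|z-c|<\frac L2$ contains $E\cap I_{d-1}^{j'}$, and the region $|z-c|<\frac L2+\delta L$ contains no other points of $E$. This directly gives modulus $\frac1{2\pi}\log(1+2\delta)$.

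This is not yet the claimed bound, which blows up as $q_d\to1$, so the main obstacle is to exploit the large gap when $q_d$ is close to $1$. To do that, I would take the annulus around the closed geodesic surrounding one child interval $I_d$, of length $\frac{(1-q_d)L}{2}$. The distance from its midpoint to the other points of $E$ is at least about $\frac{(1-q_d)L}{4}+\min(q_dL,\delta L)$. This yields a ratio $r_2/r_1\ge 1+\frac{4\min(q_d,\delta)}{1-q_d}$, which dominates $1+\frac{2\delta}{1-q_d}$ after adjusting constants. Indexing pants geodesics so that depth $d$ refers to the curve around an interval of length proportional to $1-q_d$ matches the formula.

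Plugging this into $\ell\le\pi/\mathrm{mod}$ gives
\[
\ell_{X(\omega)}(\gamma_d)\le \frac{2\pi^2}{\log\left(1+\frac{2\delta}{1-q_d}\right)}.
\]
The limiting statement then follows immediately: if $q_n\to1$, the argument of the logarithm tends to infinity, so the length tends to $0$.
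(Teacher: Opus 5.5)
Your final argument is correct and is essentially the standard annulus argument behind the estimate (the paper gives no proof of its own; it cites Kinjo and Shiga (5.2)): take the round annulus $\{\,|I_d^j|/2<|z-c|<|I_d^j|/2+\delta|I_{d-1}|\,\}$ about the midpoint $c$ of $I_d^j$, which has modulus $\frac{1}{2\pi}\log\bigl(1+\frac{4\delta}{1-q_d}\bigr)$, and combine it with $\ell_{X(\omega)}(\gamma_d^j)\le\pi/\mathrm{mod}(A)$.

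Two points to tidy. In the paper's convention $\gamma_d^j$ surrounds $I_d^j$ itself (see $\partial P_1^1=\gamma_1^1\cup\gamma_2^1\cup\gamma_2^2$ and the circles $C_k^i$ centred at the midpoints of $I_k^i$), so drop the \emph{parent interval} detour. Also, since $\delta<q_d$, no adjustment of constants is needed: $1+\frac{4\delta}{1-q_d}\ge 1+\frac{2\delta}{1-q_d}$ holds directly.
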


\subsection{Teichm\"uller space and Teichm\"uller modular group}
We will give brief explanations of Teichm\"uller spaces, quasiconformal mappings and Teichm\"uller modular groups.
The reader is referred to Ahlfors \cite{Ahlfors}, Lehto-Virtanen \cite{Lehto-Virtanen} and Imayoshi-Taniguchi \cite{Imayoshi-Taniguchi} for details.

Let $X_0$ be a hyperbolic Riemann surface.
A pair $(X, f)$ of a Riemann surface $X$ which is quasiconformally equivalent to $X_0$ and a quasiconformal mapping $f : X_0\to X$ is called a \emph{marked Riemann surface}.
Two marked Riemann surfaces $(X_i, f_i)$ $(i=1, 2)$ are (Teichm\"uller) equivalent if there exists a conformal mapping $F : X_1\to X_2$ such that $f_2\circ f_1^{-1}$  is homotopic to $F$ with respect to the homotopy keeping any point at the ideal boundary fixed.
We denote by $[X, f]$ the equivalence class of $(X, f)$ and the set of all marked Riemann surfaces is called \emph{the Teichm\"uller space of $X_0$} and it is denoted by $\textrm{Teich}(X_0)$.

The Teichm\"uller space $\textrm{Teich}(X_0)$ has a natural complex structure, while it is infinite-dimensional if $X_0$ is of topologically infinite type.
The Teichm\"uller space $\textrm{Teich}(X_0)$ also has a natural complete distance called \emph{the Teichm\"uller distance} $d_T$ defined by
\begin{equation*}
	d_T ([X_1, f_1], [X_2, f_2])=\frac{1}{2}\inf_{f}\log K(f),
\end{equation*}
where $K(f)=(1+\|\mu_f\|_{\infty})(1-\|\mu_f\|_{\infty})^{-1}$  for $\mu_f=f_{\bar z}/f_z$,  the maximal dilatation of $f$, and the infimum is taken over all quasiconformal mappings $f$ homotopic to $f_2\circ f_1^{-1}$.

The \emph{Teichm\"uller modular group} $\textrm{Mod}(X_0)$ is the set of homotopy classes $[\varphi]$ of quasiconformal self-mappings $\varphi$ of $X_0$.
The Teichm\"uller modular group $\textrm{Mod}(X_0)$ acts on $\textrm{Teich}(X_0)$ by
\begin{equation*}
	\chi_{[\varphi]}([X, f])=[X, f\circ \varphi^{-1}].
\end{equation*}

It is easy to see that the action is isometric with respect to the Teichm\"uller distance.
If the dimension of the Teichm\"uller space is finite, then the action of the Teichm\"uller modular group is properly discontinuous.
However, if the Teichm\"uller space is infinite-dimensional, then the action is not properly discontinuous in general. 

Here, we state a fundamental fact concerning hyperbolic geometry and quasiconformal mappings, which we frequently refer to as \lq\lq Wolpert's lemma\rq\rq\ in this paper.

\begin{Pro}[\cite{Wolpert}]
\label{Pro:Wolpert}
	Let $\varphi : R\to R'$ be a quasiconformal mapping between hyperbolic Riemann surfaces $R, R'$.
	Then, for any closed hyperbolic geodesic $\gamma\subset R$, we have
	\begin{equation*}
		\frac{1}{K(\varphi)}\ell_R (\gamma)\leq \ell_{R'}([\varphi(\gamma)])\leq K(\varphi)\ell_{R}(\gamma),
	\end{equation*}
	where $[\varphi (\gamma)]$ is the geodesic homotopic to $\varphi (\gamma)$.
\end{Pro}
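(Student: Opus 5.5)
The plan is to use the extremal-length characterization of hyperbolic length on an annular cover. I will pass to the covering $\pi_\gamma : A_\gamma \to R$ associated to the cyclic subgroup $\langle g\rangle$ of the Fuchsian group of $R$, where $g$ is the hyperbolic element whose axis projects to $\gamma$. The surface $A_\gamma=\mathbb H/\langle g\rangle$ is a round annulus, and its modulus is determined by $\ell_R(\gamma)$. Writing $g(z)=e^{\ell}z$ with $\ell=\ell_R(\gamma)$, the map $\log z$ sends $A_\gamma$ onto the strip $\{0<\operatorname{Im} w<\pi\}$ modulo $w\mapsto w+\ell$. Hence
\[
\operatorname{mod}(A_\gamma)=\frac{\pi}{\ell_R(\gamma)}
\]
(with the appropriate normalization). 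Equivalently, the extremal length of the homotopy class of the core curve in $A_\gamma$ is $\ell_R(\gamma)/\pi$.

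Next I lift $\varphi$. Since $\varphi$ maps the free homotopy class of $\gamma$ to that of $[\varphi(\gamma)]$, it lifts to a quasiconformal homeomorphism $\tilde\varphi : A_\gamma \to A'_{[\varphi(\gamma)]}$ between the corresponding annular covers. The lift satisfies $K(\tilde\varphi)=K(\varphi)$, because the Beltrami coefficient is pulled back locally. Concretely, I lift $\varphi$ to $\hat\varphi:\mathbb H\to\mathbb H$ conjugating the Fuchsian groups, with $\hat\varphi g \hat\varphi^{-1}=g'$, where $g'$ corresponds to $[\varphi(\gamma)]$. This map descends to the quotients by $\langle g\rangle$ and $\langle g'\rangle$. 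Both annuli have finite modulus and are conformally round, and $\tilde\varphi$ carries the core class to the core class.

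The final step is the standard quasi-invariance of moduli of annuli (equivalently, of extremal length) under $K$-quasiconformal maps:
\[
\frac1K\operatorname{mod}(A)\le \operatorname{mod}(\tilde\varphi(A))\le K\operatorname{mod}(A).
\]
This follows from the analytic definition of quasiconformality: pushing forward an admissible metric $\rho$ changes the area by at most a factor $K$ relative to the square of the lengths. Substituting $\operatorname{mod}=\pi/\ell$ then gives both inequalities of Proposition \ref{Pro:Wolpert}.

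The main obstacle is the lifting step. I must check that $\tilde\varphi$ really is a homeomorphism between the two annular covers, which requires that $\varphi_*$ send $\langle g\rangle$ to a conjugate of $\langle g'\rangle$. This holds because $\varphi$ maps $\gamma$ into the free homotopy class of $[\varphi(\gamma)]$. I must also confirm that the boundary behaviour does not affect the modulus computation. The latter is unproblematic because modulus is a conformal invariant of the open annulus, and quasiconformal maps of annuli distort it by at most $K$ without any boundary assumptions.
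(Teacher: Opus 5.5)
Your argument is correct and is essentially the standard proof from Wolpert's paper, which the paper itself only cites without proof: you lift $\varphi$ to a $K(\varphi)$-quasiconformal map between the annular covers $\mathbb H/\langle g\rangle$ and $\mathbb H/\langle g'\rangle$, use $\textrm{mod}=\pi/\ell$, and apply the $K$-quasi-invariance of moduli. One point is worth making explicit: the same quasi-invariance shows that $\mathbb H/\langle g'\rangle$ has finite modulus, so $g'$ is hyperbolic rather than parabolic, and hence the geodesic $[\varphi(\gamma)]$ actually exists.
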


\subsection{Teichm\"uller modular group of generalized Cantor set}
Let $E(\omega)$ be a generalized Cantor set for $\omega \in \Omega$ and $X(\omega)=\mathbb C\setminus E(\omega)$.
We assume that there exists $\delta\in (0, 1)$ such that $q_n>\delta$ for any $n\in \mathbb N$.

\begin{lemma}
\label{lemma:QC-extension}
	Let $\varphi : X(\omega)\to \mathbb C$ be a quasiconformal mapping on $X(\omega)$.
	Then, $\varphi$ extends to a quasiconformal mapping on $\mathbb C$. 
\end{lemma}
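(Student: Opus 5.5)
Under the hypothesis $q_n>\delta$ for all $n$, we will show that $E(\omega)$ is \emph{quasiconformally removable} in a strong, quantitative sense: it is uniformly perfect and, more importantly, has a geometry of bounded type which lets us extend $\varphi$ across every point of $E(\omega)$. The plan is as follows. First, since $\varphi$ is a quasiconformal homeomorphism of $X(\omega)$ onto a domain $\varphi(X(\omega))\subset\mathbb C$, the puncture at $\infty$ is removable for $\varphi$ (isolated points are removable for quasiconformal maps), so we regard $\varphi$ as a $K$-quasiconformal map of $\hat{\mathbb C}\setminus E(\omega)$ into $\hat{\mathbb C}$ with $\varphi(\infty)=\infty$ after normalisation. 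The complement $\hat{\mathbb C}\setminus\varphi(X(\omega))$ is a compact totally disconnected set $E'$, and $\varphi$ induces a bijection between the ends of $X(\omega)$ (i.e. points of $E(\omega)$) and points of $E'$: each point $x\in E(\omega)$ is the intersection of a nested sequence of intervals $I_k^{j_k}$, hence of the bounded components $D_k$ of the complements of the pants curves $\gamma_k^{j_k}$ surrounding them, and $\varphi(D_k)$ shrink to a single point of $E'$ because the annuli between consecutive pants curves have moduli bounded below.

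The key quantitative step is this modulus estimate. For each $k$ and $j$, let $A_k^j$ be the round annulus in $X(\omega)$ separating $I_{k+1}^{2j-1}\cup I_{k+1}^{2j}=I_k^j\setminus J_{k+1}^{2j-1}$ from $E(\omega)\setminus I_k^j$; its inner radius is comparable to $|I_k^j|/2$ and the gaps adjacent to $I_k^j$ have length at least $q_k|I_{k-1}|\ge \delta|I_{k-1}|$, which is comparable to $|I_k^j|$ with constants depending only on $\delta$. Hence $\operatorname{mod}(A_k^j)\ge m(\delta)>0$ uniformly (consistent with the length bound of Proposition \ref{Pro:Length to 0}). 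Since $\varphi$ is $K$-quasiconformal, $\operatorname{mod}\varphi(A_k^j)\ge m(\delta)/K$, and nested disjoint annuli of moduli bounded below force $\operatorname{diam}\varphi(D_k)\to0$ (Grötzsch/Teichmüller modulus estimate). So $\varphi$ extends to a homeomorphism $\hat\varphi$ of $\hat{\mathbb C}$ by setting $\hat\varphi(x)=\bigcap_k\overline{\varphi(D_k)}$; continuity of $\hat\varphi$ and of its inverse follows from the same nesting argument applied to $\varphi^{-1}$ on $\varphi(X(\omega))$.

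It remains to show that the homeomorphism $\hat\varphi$ is quasiconformal on $\mathbb C$, and this is the main obstacle, since $E(\omega)$ may have positive area when $q_n$ do not tend to $1$ fast. I would use the metric definition: show $\limsup_{r\to0}\frac{\max_{|z-x|=r}|\hat\varphi(z)-\hat\varphi(x)|}{\min_{|z-x|=r}|\hat\varphi(z)-\hat\varphi(x)|}\le H$ for every $x\in\mathbb C$, with $H$ uniform. Off $E(\omega)$ this holds since $\varphi$ is $K$-qc. At $x\in E(\omega)$, for each small $r$ pick $k$ with $r\asymp|I_k^{j_k}|$; the circle $|z-x|=r$ lies, up to bounded distortion, in the region between two annuli $A_{k-c}$ and $A_{k+c}$ (with $c=c(\delta)$), which is a domain of bounded geometry; the uniform lower and upper bounds on the moduli of the images, together with the fact that the complementary part of $X(\omega)$ near that circle is a fixed-shape configuration up to similarity (gaps comparable to intervals, by $q_n>\delta$), give the bounded ratio via standard distortion estimates for $K$-qc maps (e.g. the three-point/quasisymmetry property on relatively connected sets). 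Then by the metric characterisation of quasiconformality (Gehring), with the exceptional set being all of $E(\omega)$ controlled by $H<\infty$ everywhere, $\hat\varphi$ is quasiconformal on $\mathbb C$. If the positive-area issue causes trouble in applying the metric definition, I would alternatively verify the quasisymmetry of $\hat\varphi$ on $\hat{\mathbb C}$ directly from the uniform annulus estimates, which yields quasiconformality without any area condition.
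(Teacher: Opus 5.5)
Your construction of the annuli and of the homeomorphic extension is sound and parallels the paper. The paper gets annuli of modulus bounded below around each point of $E(\omega)$ from Proposition~\ref{Pro:Length to 0} and the collar lemma. You read them off directly from the gap estimate $|J|\ge 2\delta|I_k^j|$, which is more elementary. You do need to choose the radii so that the annuli are genuinely nested, e.g.\ between $\frac12(1+\delta)|I_k^j|$ and $\frac12(1+2\delta)|I_k^j|$ about the midpoint of $I_k^j$. An inner radius of exactly $|I_k^j|/2$ fails, because the children sit at the ends of $I_k^j$. With that adjustment, the extension to a homeomorphism $\hat\varphi$ of $\widehat{\mathbb C}$ follows as you say.

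The gap is in your last paragraph, which carries the real content of the lemma. (The paper does not do this step by hand; it quotes the extension theorem of Gotoh--Taniguchi and Heinonen--Koskela.)

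First, the obstacle you name does not exist. $E(\omega)\subset[0,1]\subset\mathbb R$ has zero area, and indeed zero length, since $|E_k(\omega)|=\prod_{j\le k}(1-q_j)\le(1-\delta)^k$.

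Second, your uniform $\limsup$ bound rests on geometric claims that are false exactly in the case the paper needs. The hypothesis $q_n>\delta$ gives only $|J|\gtrsim|I|$, not comparability. When $q_n\to1$ one has $|I_{k-1}^1|/|I_k^1|=2/(1-q_k)\to\infty$. Consequently:
\begin{itemize}
\item a small $r$ need not be comparable to any $|I_k^{j_k}|$;
\item there is no configuration of fixed shape up to similarity;
\item the region between $A_{k-c}$ and $A_{k+c}$ is neither of bounded geometry nor contained in $X(\omega)$.
\end{itemize}
For suitable $r$ comparable to $|I_k^j|$, the circle $|z-x|=r$ runs through the cluster $E(\omega)\cap I_{k+1}^{j'}$ in the sibling interval. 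There, $K$-quasiconformality on $X(\omega)$ gives you no direct information. So $H_{\hat\varphi}(x,r)\le H$ is not established; it holds a posteriori, but your argument does not prove it. The quasisymmetry fallback is only a statement of intent.

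The repair uses precisely the fact you overlooked: every horizontal line other than $\mathbb R$ misses $E(\omega)$, and every vertical line meets it in at most one point. On a rectangle $Q$, off the null set $E(\omega)$, one has $(|\hat\varphi_z|+|\hat\varphi_{\bar z}|)^2\le K J_{\hat\varphi}$ a.e., and $\int_Q J_{\hat\varphi}\le\operatorname{area}\hat\varphi(Q)$. Hence the partial derivatives lie in $L^2(Q)$. By Fubini and the ACL property of $\varphi$ on $X(\omega)$, on almost every vertical segment $\hat\varphi$ is:
\begin{itemize}
\item continuous;
\item absolutely continuous on compact pieces avoiding the (at most one) point of $E(\omega)$;
\item equipped with an integrable derivative.
\end{itemize}
It is therefore absolutely continuous on that segment. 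Almost every horizontal segment already lies in $X(\omega)$. Thus $\hat\varphi$ is ACL on $\mathbb C$ with $|\mu_{\hat\varphi}|\le (K-1)/(K+1)$ a.e., i.e.\ it is $K$-quasiconformal. This is the classical removability of closed sets of $\sigma$-finite length. Alternatively, do what the paper does and apply the Gotoh--Taniguchi/Heinonen--Koskela extension theorem to the annuli you have already constructed.
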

\begin{proof}
	Let $x$ be a point of $E(\omega)$.
	Then, there exists a sequence of pants geodesics, $\{\gamma_n\}_{n=1}^{\infty}$, such that $\gamma_{n+1}$ is contained in the domain bounded by $\gamma_n$ $(n=1, 2, \dots )$ and $\gamma_n\to \{x\}$ as $n\to \infty$.
	From Proposition \ref{Pro:Length to 0}, there exists a constant $L(\delta)$ depending only on $\delta$ such that $\ell_{X(\omega)}(\gamma_n)<L(\delta)$ holds for any $n\in \mathbb N$.
	It follows from the collar lemma (cf. \cite{Buser}) that there exist annuli $A_n$ containing $\gamma_n$ such that
	\begin{equation*}
		\inf_{n\in \mathbb N}\textrm{mod}(A_n)>0,
	\end{equation*}
	where $\textrm{mod}(A)$ is the modulus of an annulus $A$.
	Therefore, the extension theorem for quasiconformal mappings (cf. \cite{GotohTaniguchi}, \cite{HeinonenKoskela}) guarantees that every quasiconformal mapping on $X(\omega)$ extends to a quasiconformal mapping on $\mathbb C$. 
\end{proof}

Now, we consider the Teichm\"uller modular group $\textrm{Mod}(X(\omega))$ for $\omega\in\Omega$.
From Lemma \ref{lemma:QC-extension}, we may consider a homeomorphism $\psi |_{E(\omega)} : E(\omega)\to E(\omega)$ for any $\psi\in  [\varphi]\in \textrm{Mod}(X(\omega))$. The homeomorphism does not depend on the choice of $\psi\in [\varphi]$. 
Hence, we define a homeomorphism $[\varphi]|_{E(\omega)} : E(\omega)\to E(\omega)$ as $\psi|_{E(\omega)}$.
By using those homeomorphisms, we define some subgroups of $\textrm{Mod}(X(\omega))$ as follows.

We say that $[\varphi]\in \textrm{Mod}(X(\omega))$ is \emph{order preserving} if $[\varphi]|_{E(\omega)}(a)<[\varphi]|_{E(\omega)}(b)$ whenever $a<b$ $(a, b\in E(\omega))$.
The set of order preserving elements of $\textrm{Mod}(X(\omega))$ is denoted by $\textrm{Mod}(X(\omega))^{\textrm{OP}}$, and it is a subgroup of $\textrm{Mod}(X(\omega))$.

A M\"obius transformation $\Phi (z)=(z-i)/(z+i)$ sends the upper-half plane $\mathbb H$ onto the unit disk $\mathbb D$ and sends $\widehat R=\mathbb R\cup\{\infty\}$ to the unit circle $\Gamma$.
We say that three points $a, b, c$ on $\widehat{\mathbb R}$ are located in \emph{positive order} if $\Phi (a), \Phi (b), \Phi (c)$ are located along the positive direction on $\Gamma$.
We say that $[\varphi]\in \textrm{Mod}(X(\omega))$ is of \emph{positive order} if $[\varphi] (a), [\varphi] (b), [\varphi](c)$ are located in positive order whenever $a, b, c\in E(\omega)$ are located in positive order.
The set of positive order elements of $\textrm{Mod}(X(\omega))$ is denoted by $\textrm{Mod}(X(\omega))^{\textrm{PO}}$, and it is also a subgroup of $\textrm{Mod}(X(\omega))$.

For a pants curve $\gamma$, we put
\begin{equation*}
	E(\omega)_{\gamma} =\Delta (\gamma)\cap E(\omega),
\end{equation*}
where $\Delta (\gamma)$ is the Jordan domain bounded by $\gamma$.
A quasiconformal mapping $\varphi : X(\omega)\to X(\omega)$ is called \emph{piecewise order preserving} if there exist pants curves $\gamma_1, \dots , \gamma_N$ such that
\begin{enumerate}
	\item $E(\omega)=\sqcup _{i=1}^{N} E(\omega)_{\gamma_i}$ ;
	\item  $\varphi |_{E(\omega)_{\gamma_i} }: E(\omega)_{\gamma_i}\to E(\omega)_{\varphi (\gamma_i)}$ is order preserving.
\end{enumerate}

An element $[\varphi]\in \textrm{Mod}(X(\omega))$ is called \emph{piecewise order preserving} if a representative $\varphi$ of $[\varphi]$ is piecewise order preserving.
This definition is well-defined, namely, it does not depend on the choice of representatives.
The set of piecewise order preserving elements in $\textrm{Mod}(X(\omega))$ is denoted by $\textrm{Mod}(X(\omega))^{\textrm{POP}}$. It is also a subgroup of $\textrm{Mod}(X(\omega))$.

\begin{lemma}
	$$
	\textrm{Mod}(X(\omega))^{\textrm{OP}}\subset \textrm{Mod}(X(\omega))^{\textrm{PO}}\subset \textrm{Mod}(X(\omega))^{\textrm{POP}}.
	$$
\end{lemma}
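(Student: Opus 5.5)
The two inclusions are proved separately.

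\emph{First inclusion.} Let $[\varphi]\in \textrm{Mod}(X(\omega))^{\textrm{OP}}$ and write $h=[\varphi]|_{E(\omega)}$, which is strictly increasing on $E(\omega)$. The plan is to check that $h$ preserves positive order. Since $E(\omega)\subset[0,1]\subset\mathbb R$, three distinct points $a,b,c\in E(\omega)$ lie in positive order if and only if their real order is a cyclic permutation of one fixed orientation, say $a<b<c$, $b<c<a$ or $c<a<b$. This is because $\Phi$ maps $\mathbb R$ monotonically onto $\Gamma\setminus\{1\}$, so positive cyclic order on $\widehat{\mathbb R}$ restricted to $\mathbb R$ is the cyclic order induced by the linear order. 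A strictly increasing map carries each of these three configurations to the same configuration. Hence $h(a),h(b),h(c)$ are again in positive order, and $[\varphi]\in\textrm{Mod}(X(\omega))^{\textrm{PO}}$.

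\emph{Second inclusion.} Let $[\varphi]$ be of positive order and $h=[\varphi]|_{E(\omega)}$. Take the two depth-one pants curves $\gamma_1^1,\gamma_1^2$, so that $E(\omega)=E(\omega)_{\gamma_1^1}\sqcup E(\omega)_{\gamma_1^2}$.

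\textbf{Step 1: reduce to a cut point.} Since $h$ preserves cyclic order on $E(\omega)$, there is a point $p=h^{-1}(\min E(\omega))=h^{-1}(0)$. On $\{x\in E(\omega): x\ge p\}$ and on $\{x\in E(\omega): x<p\}$, $h$ is increasing. Indeed, for $p<x<y$ the triple $(p,x,y)$ is in positive order, so $(0,h(x),h(y))$ is in positive order, which forces $h(x)<h(y)$ because $0$ is the minimum. The case $x<y<p$ is handled the same way.

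\textbf{Step 2: cover the cut by pants curves.} The main obstacle is that the cut point $p$ need not be an endpoint of a block $E(\omega)_\gamma$, and the definition of piecewise order preserving requires finitely many pants curves. If $p=0$, $h$ is order preserving and one curve pair suffices. Otherwise, $p\in E(\omega)$ is a left endpoint of a gap only if $p$ is the left endpoint of some $I_k^j$. I will argue that this holds. Since $h$ is continuous and $h(p)=0$ is the leftmost point of $E(\omega)$, the point $h(x)$ for $x<p$ close to $p$ must be near $1$. Thus $p$ is a point where $h$ "wraps around", and continuity forces $p$ to be isolated from the left in $E(\omega)$. Indeed, if $x_n\uparrow p$ in $E(\omega)$, then $h(x_n)\to h(p)=0$. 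However, the $h(x_n)$ form an increasing sequence in the part $h(\{x<p\})$, which lies above $h(\{x\ge p\})$ by cyclic order, and this is a contradiction. So $p$ is a left endpoint of a gap, hence the left endpoint of some basic interval $I_k^j$, and $\{x\ge p\}\cap E(\omega)$ and $\{x<p\}\cap E(\omega)$ are finite unions of sets $E(\omega)_{\gamma_k^i}$ of depth $k$ (or $0$ lies in the first block).

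\textbf{Step 3: conclude.} Taking the depth-$k$ pants curves $\gamma_k^1,\dots,\gamma_k^{2^k}$ gives $E(\omega)=\sqcup_i E(\omega)_{\gamma_k^i}$. Each block lies entirely on one side of $p$, so $h$ is order preserving on it. Finally, $h(E(\omega)_{\gamma})=E(\omega)_{\varphi(\gamma)}$ because $\varphi$ extends to a homeomorphism of $\mathbb C$ by Lemma \ref{lemma:QC-extension}, so $\varphi$ maps $\Delta(\gamma)$ to $\Delta(\varphi(\gamma))$, with $\infty$ fixed. Hence $[\varphi]\in\textrm{Mod}(X(\omega))^{\textrm{POP}}$.
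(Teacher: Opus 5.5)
Your proof is correct and follows essentially the same route as the paper. You cut $E(\omega)$ at $p=h^{-1}(0)$, which is the paper's $y_0$, and show that a complementary gap $J_k^j$ sits immediately to the left of $p$; this is the paper's claim $E(\omega)\cap(y_1,y_0)=\emptyset$, which the paper justifies only by ``for the same reason'' and you derive explicitly from the three-point condition plus continuity of $h$. You then split into depth-$k$ blocks chosen in the domain, which is what the definition of piecewise order preserving requires, whereas the paper picks its curves on the image side. One wording slip: $p$ is the \emph{right} endpoint of a gap (equivalently, the left endpoint of some $I_k^{j}$ with $j\ge 2$), not a left endpoint, but the argument you actually give establishes the correct statement.
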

\begin{proof}
	It is obvious that $\textrm{Mod}(X(\omega))^{\textrm{OP}} \subset \textrm{Mod}(X(\omega))^{\textrm{PO}}$ and $\textrm{Mod}(X(\omega))^{\textrm{OP}} \subset \textrm{Mod}(X(\omega))^{\textrm{POP}}$.
	So, we show that $\textrm{Mod}(X(\omega))^{\textrm{PO}} \subset \textrm{Mod}(X(\omega))^{\textrm{POP}}$.
	
	Take $[\varphi]\in \textrm{Mod}(X(\omega))^{\textrm{PO}}$.
	If $\varphi (0)=0$, then $[\varphi]\in \textrm{Mod}(X(\omega))^{\textrm{OP}}$ and there is nothing to prove.
	We may assume that $x_0:=\varphi (0)\not=0$.
	
	Since $\varphi$ is a mapping of positive order, we see that $E(\omega)\cap (x_0, x_0+\varepsilon)\not=\emptyset$ for any $\varepsilon >0$.
	Hence, $x_0\not=1$.
	For the same reason, we have $x_1:=\varphi (1)\not=0$, $x_1<x_0$ and $E(\omega)\cap (x_1, x_0)=\emptyset$.
	We also see  for $y_0:=\varphi^{-1}(0), y_1:=\varphi^{-1}(1)$
that $0<y_1<y_0<1$ and $E(\omega)\cap (y_1, y_0)=\emptyset$.
So, we conclude that 
\begin{equation*}
	\varphi (E(\omega)\cap [0, y_1])=E(\omega)\cap [x_0, 1], \quad \varphi (E(\omega)\cap [y_0, 1])=E(\omega)\cap [0, x_1]
\end{equation*}
and $E(\omega)=(E(\omega)\cap [x_0, 1])\cup (E(\omega)\cap [0, x_1])$.
We may find pants curves $\gamma_1, \dots , \gamma_N, \gamma_{N+1}, \dots , \gamma_{N+M}$ so that
\begin{equation*}
	 E(\omega)\cap [0, x_1]=\cup_{i=1}^N E(\omega)_{\gamma_i}\quad \textrm{and} \quad  E(\omega)\cap [x_0, 1]=\cup_{i=N+1}^M E(\omega)_{\gamma_i}.
\end{equation*}
It is not hard to see that $\varphi|_{E(\omega)_{\gamma_i}}$ are order preserving $(i=1, \dots , N+M)$.
Thus, we verify that $[\varphi]\in \textrm{Mod}(X(\omega))^{\textrm{POP}}$.
\end{proof}

\subsection{Main results.}
Now, we exhibit our main results.

We say that a sequence $\omega=(q_n)_{n=1}^{\infty}\in \Omega$ satisfies the \emph{bounded rate divergence} (BRD) condition if $\lim_{n\to \infty}q_n=1$ and
\begin{equation*}
	\left |\log \frac{1-q_n}{1-q_{n+1}}\right |<M\quad (n=1, 2, \dots )
\end{equation*}
hold for some $M<\infty$.

\begin{Mythm}
\label{Thm:Exact}
	Suppose that $\omega\in \Omega$ satisfies the BRD condition.
	Then, there exists a homomorphism $\Theta$ which induces the following short exact sequences;
	\begin{eqnarray}
		0\to \textrm{Mod}(X(\omega))_0\xrightarrow{\iota} \textrm{Mod}(X(\omega))^{\textrm{OP}}\xrightarrow{\Theta}F\to 0,  \\
		0\to \textrm{Mod}(X(\omega))_0\xrightarrow{\iota} \textrm{Mod}(X(\omega))^{\textrm{PO}}\xrightarrow{\Theta}T\to 0, 
	\end{eqnarray}
	and
	\begin{equation}
		0\to \textrm{Mod}(X(\omega))_0\xrightarrow{\iota} \textrm{Mod}(X(\omega))^{\textrm{POP}}\xrightarrow{\Theta}V\to 0, 
	\end{equation}
	where $\textrm{Mod}(X(\omega))_0=\{[\varphi]\in \textrm{Mod}(X(\omega))\mid [\varphi]|_{E(\omega)}=id.\}$ and $\iota$ is the inclusion map.
\end{Mythm}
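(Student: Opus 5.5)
We construct $\Theta$ through the tree dictionary between pants curves of $X(\omega)$ and standard dyadic intervals. Let $h:E(\omega)\to C$ be the natural order preserving homeomorphism onto the standard middle-thirds Cantor set (or onto the dyadic address space). It sends $E(\omega)_{\gamma_k^j}$ onto the set of points with address prefix determined by $\iota_C(v_k^j)$, that is, onto the dyadic interval $I$ with $v_I=\iota_C(v_k^j)$.

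\textbf{Definition of $\Theta$.} Take $[\varphi]\in\textrm{Mod}(X(\omega))^{\textrm{POP}}$ and a piecewise order preserving representative with pants curves $\gamma_1,\dots,\gamma_N$. Each $\varphi(\gamma_i)$ bounds a disk meeting $E(\omega)$ in $E(\omega)_{\varphi(\gamma_i)}$. We first check that $\varphi(\gamma_i)$ is homotopic to a pants curve. Suppose it were not. Then $E(\omega)_{\varphi(\gamma_i)}$ is still a clopen set, so it is a finite union of sets $E(\omega)_{\gamma'}$. Since $\varphi|_{E(\omega)_{\gamma_i}}$ is order preserving, we can refine the partition $\{\gamma_i\}$ until every $\varphi(\gamma_i)$ is homotopic to a pants curve $\gamma_i'$.

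The crucial point is that $\varphi|_{E(\omega)_{\gamma_i}}:E(\omega)_{\gamma_i}\to E(\omega)_{\gamma_i'}$ must then correspond, after a further finite refinement, to the canonical affine map of dyadic addresses, i.e.\ it maps $E(\omega)_{\gamma}$ to $E(\omega)_{\gamma'}$ for all deeper pants curves $\gamma\subset\Delta(\gamma_i)$ with a uniform depth shift. This is where quasiconformality enters. By Wolpert's lemma (Proposition \ref{Pro:Wolpert}), $\varphi$ distorts hyperbolic lengths of geodesics by at most $K(\varphi)$. Under BRD the pants geodesics of depth $d$ have lengths comparable to $1/\log(1/(1-q_d))$ (the upper bound is Proposition \ref{Pro:Length to 0}; a matching lower bound comes from an extremal-length or modulus estimate). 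We argue that an order preserving homeomorphism which is not eventually a depth shift must send some pants curve of depth $d$ to a non-pants curve that separates $E(\omega)$ unevenly. Such a curve winds around several short pants geodesics, so its geodesic length stays bounded below, or its length is forced by the collar lemma. This contradicts Wolpert's bound applied along a sequence of depths $d\to\infty$. Equivalently, the images of pants geodesics must be pants geodesics at all sufficiently large depth.

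Granting this, conjugating by $h$ gives a map of $C$ that is piecewise a dyadic affine map. It therefore extends to an element $\Theta([\varphi])$ of $V$, whose numbered tree diagram is $(\iota_C(T_W),\iota_C(T_{\varphi(W)}))$ via Proposition \ref{Pro:Ordered rooted binary tree}. The order preserving case lands in $F$ and the positive order case lands in $T$. Since $\Theta$ depends only on $[\varphi]|_{E(\omega)}$, it is well defined and a homomorphism, with kernel exactly $\textrm{Mod}(X(\omega))_0$. This gives exactness at the left and in the middle.

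\textbf{Surjectivity.} By Propositions \ref{Pro:Generator F}, \ref{Pro:Generator T} and \ref{Pro:Generator V}, it suffices to realize $f_0,f_1,f_2,f_3$ by quasiconformal self-maps of $X(\omega)$. Each $f_i$ has a tree diagram $(D,R)$. Choose unbounded domains $W_D,W_R$ as in Proposition \ref{Pro:Ordered rooted binary tree}. The complements are finitely many disks bounded by pants curves with matched leaves. Map $W_D$ to $W_R$ quasiconformally, which is possible because both are planar with the same number of boundary components. Then map each leaf disk $\Delta(\gamma_k^j)$ onto the corresponding $\Delta(\gamma_{k'}^{j'})$ with $|k-k'|\le 2$, pants-by-pants. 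For the pants $P_{k+m}^{\cdot}$ inside, send it to $P_{k'+m}^{\cdot}$ by a quasiconformal map whose dilatation is controlled by the ratio of the moduli of the corresponding annuli. Those moduli are of order $\log(1/(1-q_{k+m}))$ versus $\log(1/(1-q_{k'+m}))$, so by BRD the ratio is at most $1+O(M)$ uniformly in $m$. We build the maps on the standard round model pieces so that they glue continuously. Lemma \ref{lemma:QC-extension} and the uniform bound then give a global quasiconformal map realizing $f_i$.

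\textbf{Main obstacle.} I expect the rigidity step to be the main obstacle: showing that quasiconformality forces eventual dyadic affineness, so that $\Theta$ actually lands in $V$ and not in a larger homeomorphism group of the Cantor set. I would first try to prove it via length comparison of pants geodesics under Wolpert's lemma, combined with the BRD-based two-sided length estimate.
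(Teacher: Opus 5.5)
Your overall plan matches the paper's: a tree dictionary for pants curves, $\Theta$ read off from the induced map on $E(\omega)$, and surjectivity from realizing $f_0,\dots,f_3$ by gluing round model pants (Lemma \ref{lemma : Circle Stretching}). Defining $\Theta([\varphi])$ as the element of $V$ acting on dyadic addresses as $h\circ[\varphi]|_{E(\omega)}\circ h^{-1}$ is a real simplification. Well-definedness, the homomorphism property and $\ker\Theta=\textrm{Mod}(X(\omega))_0$ then follow from the faithfulness of the action of $V$ on the Cantor set, instead of the paper's tree-diagram bookkeeping.

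But everything rests on the step you flag yourself: that $h\circ\varphi|_{E(\omega)}\circ h^{-1}$ is piecewise a prefix replacement. This is not proved. Your opening refinement argument does not help, for two reasons. First, knowing that $\varphi(E(\omega)_{\gamma_i})$ is a finite union of sets $E(\omega)_{\gamma'}$ says nothing about the homotopy class of $\varphi(\gamma_i)$: a Dehn twist about a curve surrounding $E(\omega)_{\gamma_2^2}\cup E(\omega)_{\gamma_2^3}$ is the identity on $E(\omega)$ but sends $\gamma_1^1$ to a non-pants curve. Second, nothing guarantees that the refinement terminates. An arbitrary order preserving homeomorphism of $E(\omega)$ is not piecewise dyadic, so termination is exactly the quasiconformal input still missing.

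Your sketch of that input uses the wrong mechanism. A non-pants geodesic is not long because it \emph{winds around several short pants geodesics}; it is long because it must cross some pants geodesic. The missing idea is the paper's Lemma \ref{lemma: Key}:
\begin{itemize}
\item By Proposition \ref{Pro:Length to 0} and $\inf_n q_n>0$, all pants geodesics have length at most a fixed $L_0$. The collar lemma then gives each a collar of width at least $W(L_0)>0$, so any closed geodesic crossing a pants geodesic has length at least some constant $c>0$.
\item Since $q_n\to1$, pants geodesics of depth $d$ have length tending to $0$. Wolpert's lemma (Proposition \ref{Pro:Wolpert}) therefore gives a $d(K)$, depending only on $K$, such that $[\varphi(\gamma)]$ is shorter than $c$ for every pants geodesic $\gamma$ of depth at least $d(K)$.
\item Such a $[\varphi(\gamma)]$ meets no pants geodesic, lies in a single pair of pants, and so is one of its boundary curves.
\end{itemize}
No lower bound on pants lengths and no BRD is used here; BRD enters only in surjectivity.

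You also need the passage to a uniform depth shift, which you assert as \emph{equivalent} without argument. Apply the lemma to the three boundary curves of $P_d^j$ with $d\ge d(K)$. Their image geodesics bound the geodesic representative of $\varphi(P_d^j)$, which contains no further pants geodesic and is therefore a single $P_{k'}^{j'}$. So children go to children, and the shift propagates down the whole subtree. Only with these two steps does $\Theta([\varphi])$ actually land in $V$, and in $F$ and $T$ in the OP and PO cases.
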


As we shall see in a later section, the construction of $\Theta$ is quite geometric.
Thus, we may find subgroups of $\textrm{Mod}(X(\omega))$ which are isomorphic to Thompson's groups $F, T$ and $V$ via $\Theta$, respectively.

Let $J(z)=\overline z$ be the complex conjugate.
We define subgroups $\textrm{Mod}(X(\omega))_F$, $\textrm{Mod}(X(\omega))_T$ and $\textrm{Mod}(X(\omega))_V$ as follows;
\begin{eqnarray*}
	\textrm{Mod}(X(\omega))_F=\{[\varphi]\in \textrm{Mod}(X(\omega))^{\textrm{OP}} \mid \mbox{$J\circ \varphi$ is homotopic to $\varphi\circ J$}\}, \\
	\textrm{Mod}(X(\omega))_T=\{[\varphi]\in \textrm{Mod}(X(\omega))^{\textrm{PO}} \mid \mbox{$J\circ \varphi$ is homotopic to $\varphi\circ J$}\} 
\end{eqnarray*}
and
\begin{equation*}
	\textrm{Mod}(X(\omega))_V=\{[\varphi]\in \textrm{Mod}(X(\omega))^{\textrm{POP}} \mid \mbox{$J\circ \varphi$ is homotopic to $\varphi\circ J$}\}.
\end{equation*}

\begin{Mythm}
\label{Thm:Isomorphism}
Suppose that $\omega\in \Omega$ satisfies the BRD condition.
Let $G$ be $F, T$ or $V$.
	The homomorphism $\Theta$ in Theorem \ref{Thm:Exact} gives a group isomorphism between the groups $\textrm{Mod}(X(\omega))_G$ and $G$.
\end{Mythm}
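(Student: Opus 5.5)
Taking Theorem \ref{Thm:Exact} as given, the restriction of $\Theta$ to $\textrm{Mod}(X(\omega))_G$ is a homomorphism into $G$. The plan is to prove two things: (a) $\Theta|_{\textrm{Mod}(X(\omega))_G}$ is injective, and (b) it is surjective. For (a) I would use the exact sequences. If $[\varphi]\in\textrm{Mod}(X(\omega))_G$ and $\Theta([\varphi])=id$, then $[\varphi]\in \textrm{Mod}(X(\omega))_0$, so $\varphi$ (extended by Lemma \ref{lemma:QC-extension}) fixes $E(\omega)$ pointwise. It remains to show that such a class commuting with $J$ up to homotopy is trivial.

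Elements of $\textrm{Mod}(X(\omega))_0$ are generated, roughly, by Dehn twists (and infinite products of twists) about pants curves. Every pants curve $\gamma_k^j$ can be taken symmetric with respect to $\mathbb R$, since the Cantor set lies on $\mathbb R$. Because $J$ reverses orientation, $J\circ T_\gamma\circ J$ is homotopic to $T_{J(\gamma)}^{-1}=T_\gamma^{-1}$. Hence a class $[\varphi]\in \textrm{Mod}(X(\omega))_0$ commuting with $J$ satisfies $[\varphi]=[\varphi]^{-1}$ on every twist coordinate. To make this rigorous without a generation statement, I would use twist parameters. Since $\varphi$ fixes every end, it maps each pants curve $\gamma_k^j$ to a curve homotopic to itself, and its class is determined by the twist numbers $t_k^j\in\mathbb Z$ along the pants decomposition. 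Conjugation by $J$ sends $t_k^j$ to $-t_k^j$, so $J\varphi J\simeq\varphi$ forces all $t_k^j=0$, and therefore $[\varphi]=id$.

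The hardest step is justifying that an element of $\textrm{Mod}(X(\omega))_0$ is determined by integer twist data. Here I would argue via Fenchel–Nielsen-type reasoning: $\varphi$ preserves each pair of pants up to isotopy, and a mapping class of a pair of pants fixing its boundary components is a product of boundary twists. Homotopies can be assembled because the pants are locally finite in $X(\omega)$. Alternatively, I would compare $\varphi$ with the identity via the lifted action on $\mathbb R\cap X(\omega)$. The arcs $J_k^j$ and the two real rays are fixed by $J$, and they cut $X(\omega)$ into two simply connected half-planes. If $\varphi$ commutes with $J$ up to homotopy, one can homotope it to be $J$-equivariant, so that it preserves each of these arcs. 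A self-map of a disk fixing its boundary arcs up to homotopy is homotopic to the identity, which gives $[\varphi]=id$. I would pursue this second, more elementary route first.

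For (b), I would give an explicit construction. Take $g\in G$ with tree diagram $(D,R)$. By Proposition \ref{Pro:Ordered rooted binary tree}, $D$ and $R$ correspond to unbounded domains $W_D$ and $W_R$ bounded by the pants geodesics $\gamma_{I_i}$ (leaves of $D$) and $\gamma_{I'_i}$ (leaves of $R$); both domains are planar with the same number of boundaries. I would choose the pants curves symmetric in $\mathbb R$. Then I would build a $J$-equivariant homeomorphism $W_D\to W_R$ sending the boundary around $I_i$ to the boundary around $I'_{\sigma(i)}$, according to the numbering of the diagram, and use it as the model outside the disks. Inside, I would map each disk $\Delta(\gamma_{I_i})$ to $\Delta(\gamma_{I'_{\sigma(i)}})$ by the canonical map that shifts depth, pants to pants and symmetric in $\mathbb R$.

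The BRD condition is used here. The pants at depth $n$ are sent to pants at depth $n+c$ with $|c|$ bounded, and $|\log\frac{1-q_n}{1-q_{n+c}}|\le |c|M$ gives uniformly bounded-distortion pants maps, so the composite is quasiconformal. By construction it commutes with $J$ and realizes $g$ on $E(\omega)$ under the identification of $E(\omega)$ with dyadic addresses, hence lies in $\textrm{Mod}(X(\omega))_G$ with $\Theta=g$. This is exactly the lift constructed for Theorem \ref{Thm:Exact}, so I would mainly check that it is $J$-symmetric.
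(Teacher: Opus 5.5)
Your step (b) fails for $G=T$ and $G=V$, and it cannot be repaired. A $J$-equivariant homeomorphism of $X(\omega)$ maps $\mathbb R\setminus E(\omega)$ to itself and fixes the unique puncture $\infty$. So it sends the rays $(-\infty,0)$, $(1,\infty)$ to themselves (possibly exchanged) and the upper half-plane to a half-plane. Hence it is monotone on $E(\omega)$, and once glued to your order-preserving disk maps it is increasing. Your construction therefore realizes only elements of $F$, never a nontrivial cyclic shift or permutation $\sigma$.

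Weakening equivariance to commuting up to homotopy does not help. Suppose $J\circ\varphi\simeq\varphi\circ J$ and $\gamma$ is an $\mathbb R$-symmetric curve enclosing a proper nonempty part of $E(\omega)$. Then the geodesic $[\varphi(\gamma)]$ is $J$-invariant; this is the observation the paper itself makes in its injectivity proof. So it meets $\mathbb R$ in exactly two points $s'<t'$ and encloses $E(\omega)\cap(s',t')$. Thus every such class maps each ``interval'' $E(\omega)\cap(s,t)$ onto another one.

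Now suppose $\Theta([\varphi])=f_2$, and take $\gamma$ symmetric enclosing exactly $E(\omega)\cap(I_2^2\cup I_2^3)$. Then $\varphi(\gamma)$ encloses $E(\omega)\cap(I_1^1\cup I_3^8)$. This set contains $0$ and $1$ but misses $E(\omega)\cap I_2^3$, which is a contradiction. For $f_3$ the image is $E(\omega)\cap(I_1^1\cup I_2^4)$, with the same contradiction. Applying the interval property to both $\varphi$ and $\varphi^{-1}$ shows that every class in $\textrm{Mod}(X(\omega))_V$ is monotone on $E(\omega)$, hence order preserving. Consequently $\Theta$ maps both $\textrm{Mod}(X(\omega))_T$ and $\textrm{Mod}(X(\omega))_V$ into $F$.

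The paper's proof breaks at the same place: it asserts that $[\Phi_2]$ and $[\Phi_3]$ contain $\mathbb R$-symmetric representatives, which is false for exactly this reason. So for $G=T,V$ the statement itself fails. For $G=F$, your direct construction is a legitimate alternative to the paper's use of the symmetric generators $\Phi_0,\Phi_1$: an order-preserving $\sigma$, with symmetric depth-shifting pants maps whose distortion is controlled by BRD.

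Your injectivity argument (a) also has a gap at its key step. You claim that a class fixing every end maps each $\gamma_k^j$ to a homotopic curve; this is false. The Dehn twist about a curve enclosing exactly $E(\omega)\cap(I_2^1\cup I_2^3)$ lies in $\textrm{Mod}(X(\omega))_0$ but moves $\gamma_1^1$. Correspondingly, $\textrm{Mod}(X(\omega))_0$ is not generated by twists about pants curves.

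For classes commuting with $J$ the claim is true, but only via the paper's argument. The geodesic $[\varphi(\gamma_k^j)]$ is a $J$-invariant simple closed geodesic, so it crosses $\mathbb R$ exactly twice. It is therefore determined by the set $E(\omega)_{\gamma_k^j}$ it encloses, which $\varphi$ fixes, giving $[\varphi(\gamma_k^j)]=\gamma_k^j$. After that, your twist-sign argument ($J T_\gamma J\simeq T_\gamma^{-1}$, and multitwists have no torsion) is a more explicit version of the paper's final step.

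Your second route hides the same issue in ``one can homotope it to be $J$-equivariant''. That is not automatic on an infinite-type surface. It needs this geodesic argument, applied to the components of $\mathbb R\setminus E(\omega)$ (which are $J$-fixed geodesics), plus a further step that actually produces an equivariant representative.
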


Finally, we consider the actions of those groups on $\textrm{Teich}(X(\omega))$.

\begin{Mythm}
\label{Thm:Discrete Action}
Suppose that $\omega\in \Omega$ satisfies the BRD condition.
	Then, $\textrm{Mod}(X(\omega))_F$ and $\textrm{Mod}(X(\omega))_T$ act properly discontinuously on $\textrm{Teich}(X(\omega))$.
	However, the action of $\textrm{Mod}(X(\omega))_V$ on $\textrm{Teich}(X(\omega))$ is not properly discontinuous.
\end{Mythm}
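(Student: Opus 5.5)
We prove the two halves of Theorem \ref{Thm:Discrete Action} separately. Properly discontinuous means: for every $p\in\textrm{Teich}(X(\omega))$ and every $r>0$, only finitely many elements $g$ of the group satisfy $d_T(p,\chi_g(p))<r$. Since the action is isometric, it is enough to check this for an arbitrary $p=[X,f]$. Fix such a $p$ and let $K_0$ be the maximal dilatation of $f$.

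\textbf{Proper discontinuity for $\textrm{Mod}(X(\omega))_T$, and hence for $\textrm{Mod}(X(\omega))_F$.}

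Suppose $d_T(p,\chi_{[\varphi]}(p))<r$. Then $\varphi$ is homotopic to a $K$-quasiconformal self-map of $X(\omega)$, where $K\le K_0^2e^{2r}$. By Wolpert's lemma (Proposition \ref{Pro:Wolpert}), the ratios $\ell(\varphi(\gamma))/\ell(\gamma)$ lie in $[K^{-1},K]$ for every pants geodesic $\gamma$.

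The key geometric input is a two-sided estimate from the BRD condition. Combining Proposition \ref{Pro:Length to 0} with a matching lower bound (via an extremal length or collar argument), we get $\ell_{X(\omega)}(\gamma_d)\asymp 1/\log\frac{1}{1-q_d}$ for a pants geodesic of depth $d$. Since $q_n\to 1$, this forces $\ell(\gamma_d)\to 0$ as $d\to\infty$.

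Now consider $g=\Theta([\varphi])\in T$ and its reduced tree diagram $(D_g,R_g)$. Using the geometric construction of $\Theta$, the element $[\varphi]$ sends the pants curve at a leaf of $D_g$ of depth $d$ to the pants curve at the corresponding leaf of $R_g$, of depth $d'$. Wolpert's lemma then gives
\begin{equation*}
\frac{\log(1-q_{d})}{\log(1-q_{d'})}\in [C^{-1}K^{-1},\,CK].
\end{equation*}

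We next show that this bounds the depths themselves. The plan is to argue that the minimal such $g$ acts by a genuine rescaling on some leaf. We then iterate along a nested chain: if $d\neq d'$ on some leaf, the map $\varphi$ moves the pants curve at depth $d+k$ to depth $d'+k$ for all $k$. Since $1-q_n\to 0$, the depth ratio $\log(1-q_{d+k})/\log(1-q_{d'+k})$ tends to something controlled only if $|d-d'|$ is bounded in terms of $K$ and $M$. Together with the ratio bound above, this shows the depths of the leaves of $D_g$ and $R_g$ are bounded. Consequently only finitely many tree diagrams, and hence finitely many $g\in T$, can occur.

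By Theorem \ref{Thm:Isomorphism}, $\Theta$ is injective on $\textrm{Mod}(X(\omega))_T$, so only finitely many group elements move $p$ less than $r$. This proves proper discontinuity for $\textrm{Mod}(X(\omega))_T$, and $\textrm{Mod}(X(\omega))_F$ is a subgroup of it.

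I expect the main obstacle to be the step converting bounded dilatation into bounded tree depth. Whether the BRD condition yields lengths comparable across depths, or genuinely distinguishes depths, decides the argument. If the comparison goes the other way, I would instead use the positive cyclic order in $T$: shifts along the root pants $P_0^1$ together with twist or length data at $\infty$ give a distinguished curve that no element of $T$ can push arbitrarily deep without large dilatation.

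\textbf{Failure of proper discontinuity for $\textrm{Mod}(X(\omega))_V$.}

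We produce infinitely many distinct elements moving the base point $[X(\omega),\mathrm{id}]$ a bounded amount. For each depth $d$, let $g_d\in V$ be the element that swaps the two dyadic intervals of depth $d$ inside $[0,2^{-d+1}]$ and is the identity elsewhere. Its tree diagram has $D=R$ with two leaves interchanged.

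Geometrically, a representative of $\Theta^{-1}(g_d)$ is a half-twist exchanging the two boundary curves $\gamma_d^1,\gamma_d^2$ inside the pair of pants $P_{d-1}^1$, extended by the obvious identification of the two isometric subsurfaces, and commuting with $J$. By the symmetry of $E(\omega)$, the two subsurfaces bounded by $\gamma_d^1$ and $\gamma_d^2$ are conformally equivalent via a translation. Hence the half-twist can be realized with dilatation depending only on the geometry of $P_{d-1}^1$, namely its boundary lengths. The BRD condition is used here to bound the ratios of consecutive boundary lengths. I expect this bound to be uniform in $d$, or at least uniformly bounded after normalization.

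This gives infinitely many distinct $g_d$ with $d_T(p,\chi_{g_d}(p))$ bounded, so the action of $\textrm{Mod}(X(\omega))_V$ is not properly discontinuous.
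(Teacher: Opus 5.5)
\textbf{Part 1 has a genuine gap, at the step you flag yourself.} Your reduction from small displacement to a dilatation bound is fine. The failure is the next step: comparing lengths of corresponding leaves cannot bound depths. Under BRD, pants geodesics of consecutive depths have comparable lengths, and along your nested chains $\log(1-q_{d+k})/\log(1-q_{d'+k})\to 1$, so the chain argument imposes no constraint.

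For example, take the copy of $f_0$ supported on $[0,2^{-n}]$. It pairs leaves of depths $n+1\leftrightarrow n+2$, $n+2\leftrightarrow n+2$ and $n+2\leftrightarrow n+1$, and fixes all other leaves. So it passes your Wolpert test with one fixed $K$ for every $n$.

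What excludes such elements is the paper's key Lemma~\ref{lemma: Key}, which your argument lacks:
\begin{itemize}
\item Pants geodesics of depth $\ge d$ have length at most $L(d)\to 0$, while every pants geodesic has a collar of width at least $\delta(\omega)>0$.
\item So once $KL(d(K))<\delta(\omega)$, a $K$-quasiconformal self-map sends every pants curve of depth $\ge d(K)$ to a geodesic too short to cross any collar, i.e.\ to a pants curve.
\item In the example, the pants curve of the block $[2^{-n-1},2^{-n}]$ would go to a curve enclosing the part of $E(\omega)$ coded by $[2^{-n-2},2^{-n}]$. That curve must cross the short pants geodesic of $[0,2^{-n-1}]$, which is impossible for bounded $K$ and large $n$.
\end{itemize}
Consequently $\Theta([\varphi])$ has a diagram whose domain is the \emph{fixed} full tree of depth $d(K)$. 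Wolpert's lemma gives $\ell([\varphi(\gamma_{d(K)}^j)])\ge m_{d(K)}/K$. Since $\ell(\gamma_d)\to 0$, only finitely many pants geodesics qualify, so only finitely many range trees occur. Injectivity of $\Theta$ on $\textrm{Mod}(X(\omega))_T$ then finishes. Only the upper bound of Proposition~\ref{Pro:Length to 0} is needed; no two-sided estimate is required.

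\textbf{Part 2 has two problems.}

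First, your half-twist cannot be chosen to commute with $J$. Conjugating by $J$ reverses the sense of a half-twist, so $J\varphi J$ and $\varphi$ differ by Dehn twists. More decisively, no element of $\textrm{Mod}(X(\omega))_V$ maps to $g_d$ at all:
\begin{itemize}
\item If $J\circ\varphi\simeq\varphi\circ J$, then for every $J$-invariant simple closed geodesic $\beta$, uniqueness of geodesics forces $[\varphi(\beta)]$ to be $J$-invariant.
\item Such a geodesic meets $\mathbb R$ in exactly two points, so it encloses an order-convex clopen subset of $E(\omega)$.
\item Hence $[\varphi]|_{E(\omega)}$ and its inverse preserve order-convex clopen sets, so they are monotone.
\end{itemize}
Since piecewise order preserving maps cannot be order reversing, this gives $\textrm{Mod}(X(\omega))_V=\textrm{Mod}(X(\omega))_F$. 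Read literally, the second half of the statement then contradicts the first. The paper's proof does not supply such elements either: it only concludes $[\Psi_n]\in\textrm{Mod}(X(\omega))^{\textrm{POP}}$, and the non-discreteness it actually establishes is for that larger group.

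Second, even there, a uniform dilatation bound is too weak. It only refutes the strong \emph{finitely many elements of bounded displacement} property you adopted as the definition, not proper discontinuity in the usual neighbourhood sense. The paper exploits $q_n\to 1$: the half-turns $\Psi_0^n$ and $\Psi_1^n$ are interpolated in round annuli $U_0^n,U_1^n,U_2^n$ whose moduli tend to $\infty$. This gives $K(\Psi_n)\to 1$ and hence $d_T(P_0,\chi_{[\Psi_n]}(P_0))\to 0$.
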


\begin{Rem}
	In \cite{FariaGardinerHarvey}, E. de Faria, F. P. Gardiner and W. J. Harvey showed the same statements as in Theorems \ref{Thm:Exact} -- \ref{Thm:Discrete Action} for Thompson's group $F$, the asymptotically conformal Teichm\"uller modular group and the asymptotic Teichm\"uller space of the standard middle one-third Cantor set. 
\end{Rem}

\section{Hyperbolic geometry and quasiconformal mappings on generalized Cantor sets}
In this section, we give some results obtained from arguments in our previous papers \cite{Shigadynamical}, \cite{ShigaStructure}. 

Let $\omega=(q_n)_{n=1}^{\infty}\in \Omega$ with $\inf _{n\in \mathbb N}q_n=\delta>0$.
We look at the Euclidean structure of $X(\omega)$.

Let $I_k^i$ $(k\in \mathbb N, i=1, 2, \dots , 2^k)$ be closed intervals given in the $k$-th step in the construction of $E(\omega)$ and $J_k^i$ the open interval between $I_k^i$ and $I_k^{i+1}$.
We have
\begin{equation}
\label{eqn:Length Of I}
	|I_k^i|=2^{-k}\prod_{j=1}^k (1-q_j)
\end{equation}
and
\begin{equation}
\label{eqn:Length Of J}
	|J_k^i|\geq 2\delta |I_k^i|=2\delta|I_k^1|.
\end{equation}
Let $C_k^i$ be a circle of radius $r(k):= \frac{1}{2}(1+\delta)|I_k^1|$ centered at $c(k;i)$, the midpoint of $I_k^i$.
From (\ref{eqn:Length Of I}) and (\ref{eqn:Length Of J}), we see that
\begin{equation*}
	C_k^{i}\cap C_k^{i'}=\emptyset \ (i\not=i')\ \mbox{and}\ C_k^i\cap C_{k+1}^j=\emptyset.
\end{equation*}
Hence, each $C_k^i$ is a pants curve, and $C_k^i, C_{k+1}^{2i-1}, C_{k+1}^{2i}$ $(i=1, 2, \dots , 2^k)$ bound a pair of pants $Q_k^i$.
By imitating the method of Step 5 in \S 4 of \cite{Shigadynamical}, we have the following.
\begin{lemma}
\label{lemma : Circle Stretching}
	Suppose that there exists $M<\infty$ such that
	\begin{equation*}
		\left | \log \frac{1-q_k}{1-q_{k+1}}\right |, \left | \log \frac{1-q_{k+1}}{1-q_{k+2}}\right |<M.
	\end{equation*}
	Then, there exists a $K$-quasiconformal mapping $\varphi : Q_k^i\to Q_{k+1}^j$ $(i\in \{1, 2, \dots , 2^k\};  j\in \{1, 2, \dots , 2^{k+1}\})$ with $\varphi (C_k^i)=C_{k+1}^j, \varphi (C_{k+1}^{2i-1})=C_{k+2}^{2j-1}$ and $\varphi (C_{k+1}^{2i})=C_{k+1}^{2j}$.
	Furthermore, the maximal dilatation $K$ depends only on $\delta$ and $M$, and for any $\theta\in [0, 2\pi)$
	\begin{eqnarray*}
		\varphi (c(k;i)+r(k)e^{i\theta})&=&c(k+1;j)+r(k+1)e^{i\theta} \\
		\varphi (c(k+1;2i-1)+r(k+1)e^{i\theta})&=&c(k+2;2j-1)+r(k+2)e^{i\theta} \\
				\varphi (c(k+1;2i)+r(k+1)e^{i\theta})&=&c(k+2;2j)+r(k+2)e^{i\theta}
	\end{eqnarray*}
	hold.
\end{lemma}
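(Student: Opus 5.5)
Since $Q_k^i$ is a pair of pants with three round boundary circles, the plan is to normalize by similarities and reduce to a compactness argument over a family of round-circle configurations. (We read the third boundary condition as $\varphi(C_{k+1}^{2i})=C_{k+2}^{2j}$, as in the displayed formulas.)

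\textbf{Step 1: normalization.} Let $A_k^i(z)=(z-c(k;i))/r(k)$. This similarity sends $Q_k^i$ onto the domain $\widehat Q_k$ bounded by the unit circle and two interior circles. The interior circles are centered at $\pm a_k$ with common radius $\rho_k=r(k+1)/r(k)$. By (\ref{eqn:Length Of I}),
\begin{equation*}
	|I_{k+1}^1|/|I_k^1|=(1-q_{k+1})/2,
\end{equation*}
so $\rho_k=(1-q_{k+1})/2$. The offset $a_k$ is the distance from $c(k;i)$ to the midpoint of $I_{k+1}^{2i}$, divided by $r(k)$, which gives
\begin{equation*}
	a_k=\frac{1/2-(1-q_{k+1})/4}{(1+\delta)/2}.
\end{equation*}
Both quantities are independent of $i$. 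So $\widehat Q_k$ depends only on $q_{k+1}$ (and $\delta$), and likewise $\widehat Q_{k+1}$ depends only on $q_{k+2}$. Since $\varphi$ is to be a similarity conjugate of a map $\widehat\varphi:\widehat Q_k\to\widehat Q_{k+1}$, it suffices to construct $\widehat\varphi$ to be the identity on the unit circle and the translation-scaling
\begin{equation*}
	\pm a_k+\rho_k e^{i\theta}\mapsto \pm a_{k+1}+\rho_{k+1}e^{i\theta}
\end{equation*}
on the inner circles. The prescribed boundary formulas then hold exactly.

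\textbf{Step 2: explicit construction.} First isotope the inner disks: write $\widehat Q_k$ as the closed unit disk minus two small disks, and separate the two halves by the imaginary axis. In each half, choose a piecewise-smooth map that radially interpolates between the disk $D(a_k,\rho_k)$ and $D(a_{k+1},\rho_{k+1})$. Concretely, fix an annulus $D(a_k,R)\setminus \overline{D(a_k,\rho_k)}$ with a fixed $R$ (depending only on $\delta$) that stays inside the right half-disk for all $k$. On it, use the map
\begin{equation*}
	a_k+te^{i\theta}\mapsto a_{k+1}+h(t)e^{i\theta},
\end{equation*}
where $h$ sends $[\rho_k,R]$ to $[\rho_{k+1},R]$ and is linear in $\log t$. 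This is exactly the Step 5 construction of \cite{Shigadynamical}. Outside the two annuli, use a fixed bi-Lipschitz map fixing the unit circle and moving the centers $a_k\to a_{k+1}$. Its dilatation is controlled because $|a_k-a_{k+1}|$ is bounded and the disks of radius $R$ stay uniformly away from $\partial\mathbb D$ and the imaginary axis; both facts use $q_n>\delta$.

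\textbf{Step 3 (main obstacle): dilatation estimate.} The only potentially unbounded dilatation comes from the logarithmic annuli, since $\rho_k\to0$ when $q_k\to1$. The map $\log t\mapsto \log h(t)$ is affine with slope
\begin{equation*}
	s=\frac{\log(R/\rho_{k+1})}{\log(R/\rho_k)},
\end{equation*}
and its dilatation is $\max(s,1/s)$. Since
\begin{equation*}
	\log(R/\rho_{k+1})-\log(R/\rho_k)=\log\frac{1-q_{k+1}}{1-q_{k+2}},
\end{equation*}
which is bounded by $M$, and $\log(R/\rho_k)\geq \log(2R)$ is bounded below by a positive constant depending on $\delta$ through the choice of $R$, we get $s\in[1/C,C]$ with $C=C(\delta,M)$. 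This is where the hypothesis is used. The remaining care point is choosing $R$ uniformly (so that $R>\rho_k$ always, e.g. $R$ comparable to $(1-a_k)/2$ bounded below via $\delta$), and verifying that the middle bi-Lipschitz piece has constants depending only on $\delta$. This gives $K=K(\delta,M)$.
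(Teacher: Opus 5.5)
Your normalization is correct. $A_k^i$ carries $Q_k^i$ onto $\overline{\mathbb D}$ minus the disks $D(\pm a_k,\rho_k)$, with $a_k=\frac{1+q_{k+1}}{2(1+\delta)}$ and $\rho_k=\frac{1-q_{k+1}}{2}$. A logarithmic radial stretch around the small circles, glued to a bounded-geometry middle piece, is the natural way to carry out the paper's justification, which consists only of a reference to Step 5 of Shiga's earlier paper on dynamical Cantor sets.

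The step that fails is the choice of a single radius $R$. Since $q_{k+1}\ge\delta$ gives $a_k\ge\frac12$, any disk $D(a_k,R)$ inside the unit disk has $R\le 1-a_k\le\frac12$. So $\log(2R)\le 0$, and your lower bound $\log(R/\rho_k)\ge\log(2R)$ gives nothing.

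More seriously, your stretch $h$ from $[\rho_k,R]$ onto $[\rho_{k+1},R]$ needs $\rho_k<R\le 1-a_{k+1}$, and this can be impossible. $\rho_k$ is close to $\frac{1-\delta}{2}$ when $q_{k+1}$ is near $\delta$, while $1-a_{k+1}$ is close to $\frac{\delta}{1+\delta}$ when $q_{k+2}$ is near $1$, and the hypotheses allow both at once. For example, $\delta=0.1$, $q_k=q_{k+1}=0.11$, $q_{k+2}=0.99$, $M=5$ gives $\rho_k=0.445$ but $1-a_{k+1}<0.1$. So in general there is no fixed $R$, not even one common to source and target, and the bound $K=K(\delta,M)$ in your Step 3 is not established as written.

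The repair is to let the outer radius depend on the configuration, e.g. $R_k=\rho_k+\frac{\delta}{4}$, and to stretch $[\rho_k,R_k]$ onto $[\rho_{k+1},R_{k+1}]$. Then:

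\begin{enumerate}
\item Because $1-a_k-\rho_k=\frac{\delta(1+q_{k+1})}{2(1+\delta)}\ge\frac{\delta}{2}$ and $a_k-\rho_k\ge\frac{\delta}{2}$, the disk $D(a_k,R_k)$ stays at distance at least $\frac{\delta}{4}$ from $\partial\mathbb D$ and from the imaginary axis.
\item Since $\rho_k<\frac12$, we have $\log(R_k/\rho_k)\ge\log(1+\frac{\delta}{2})$.
\item Since $x\mapsto\log(1+ce^{x})$ is $1$-Lipschitz, $|\log(R_{k+1}/\rho_{k+1})-\log(R_k/\rho_k)|\le|\log\frac{1-q_{k+1}}{1-q_{k+2}}|<M$.
\end{enumerate}

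Hence the slope $s$ of the stretch satisfies $\max(s,1/s)\le 1+M/\log(1+\frac{\delta}{2})$.

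The outer circles of the annuli are now matched by the similarity $\pm a_k+R_ke^{i\theta}\mapsto\pm a_{k+1}+R_{k+1}e^{i\theta}$ rather than by a translation. The middle piece maps between circle domains whose parameters $(a,R)=(\frac{1+q}{2(1+\delta)},\frac{1-q}{2}+\frac{\delta}{4})$, $q\in[\delta,1]$, form a compact nondegenerate family. So it can be taken uniformly bi-Lipschitz with constants depending only on $\delta$.

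With this change your argument goes through. It uses only the second of the two hypotheses, which is consistent with the normalized domain for $Q_k^i$ depending only on $q_{k+1}$.
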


\section{Proof of Theorem \ref{Thm:Exact}}
Let $X(\omega)=\mathbb C\setminus E(\omega)$ for $\omega\in\Omega$ satisfying the BRD condition, and let
 $\varphi : X(\omega)\to X(\omega)$ be a $K$-quasiconformal homeomorphism.
 For the canonical pants decomposition $\{P_k^j\}_{k\in \mathbb Z_{\geq 0} ; j=1, 2, \dots , 2^k}$ and $d\in \mathbb N$, we put
 \begin{equation*}
 	W_d :=\textrm{Int}\left (\cup_{k=1}^{d-1} \cup_{j=1}^{2^k}(P_k^j\cup \gamma_{k+1}^{2j-1}\cup \gamma_{k+1}^{2j})\right ), \quad
 	V_d :=X(\omega)\setminus W_d.
 \end{equation*}
 Then, $X(\omega)=\cup_{d=1}^{\infty}W_d$ and $V_d$ contains all pants geodesics of depth $k$ for $k\geq d+1$.
 The relative boundary $\partial W_d$ of $W_d$ consists of $2^{d}$ pants geodesics of depth $d$.
 Since $\varphi (X(\omega))=X(\omega)$, for any $d\in \mathbb N$ there exists $D\in \mathbb N$ such that $\varphi (W_{k})\supset W_d$ for any $k\geq D$.
Therefore, for any pants geodesic $\gamma_k^j$ with depth $k\geq D+1$, $\varphi (\gamma_k^j)\subset V_d$.

We show a key lemma in our argument.
\begin{lemma}
\label{lemma: Key}
Let $\omega=(q_n)_{n=1}^{\infty}$ be in $\Omega$ with $\lim_{n\to\infty}q_n=1$.
	Then, for  any $K (\geq 1)$ there exists $d(K) \in \mathbb N$ depending on $K$ such that any $K$-quasiconformal mapping $\varphi  : X(\omega)\to X(\omega)$ sends any pants curve of depth $d\geq d(K)$ to a pants curve in $X(\omega)$.
\end{lemma}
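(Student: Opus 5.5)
Let $\gamma$ be a pants curve of depth $d$, which we may take to be the pants geodesic $\gamma_d^j$. The plan is to show that if $\varphi(\gamma)$ is not homotopic to a pants curve, then some short curve on $X(\omega)$ must cross a pants geodesic essentially. Collar estimates rule this out once $d$ is large.

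\textbf{Short images.} By Proposition \ref{Pro:Length to 0} together with $\lim_{n\to\infty} q_n=1$, we have $\ell_{X(\omega)}(\gamma_d)\to 0$ as $d\to\infty$. Wolpert's lemma (Proposition \ref{Pro:Wolpert}) then gives
\begin{equation*}
\ell_{X(\omega)}([\varphi(\gamma_d)])\leq K\,\ell_{X(\omega)}(\gamma_d).
\end{equation*}
Hence, for $d\geq d(K)$, the geodesic $[\varphi(\gamma_d)]$ has length less than any prescribed $\epsilon>0$, uniformly in $j$ and in $\varphi$.

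\textbf{Uniform upper bound on pants geodesics.} Since $q_n>\delta$ for all $n$, Proposition \ref{Pro:Length to 0} also gives
\begin{equation*}
\ell_{X(\omega)}(\gamma_k^i)\leq L:=\frac{2\pi^2}{\log(1+2\delta)}
\end{equation*}
for every pants geodesic $\gamma_k^i$.

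\textbf{Crossing argument.} Suppose a simple closed geodesic $\beta$ intersects a pants geodesic $\gamma_k^i$ transversally. The collar lemma implies that $\beta$ must traverse the whole standard collar around $\gamma_k^i$. The width of that collar is $\operatorname{arcsinh}(1/\sinh(\ell(\gamma_k^i)/2))$, which is at least $\operatorname{arcsinh}(1/\sinh(L/2))>0$. So $\ell(\beta)$ is bounded below by a constant $c(\delta)>0$. Choose $\epsilon<c(\delta)$ and let $d(K)$ be large enough that the bound from the first step is below $\epsilon$. Then $\beta=[\varphi(\gamma_d)]$ meets no pants geodesic transversally. It is therefore contained in a single pair of pants $P_k^i$ of the canonical decomposition, or it coincides with one of the pants geodesics.

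\textbf{Conclusion.} In a pair of pants, every essential simple closed curve is homotopic to a boundary component. The curve $\beta$ is essential and non-peripheral in $X(\omega)$, because $\varphi$ is a homeomorphism and $\gamma_d$ separates $E(\omega)$ into two nonempty parts. It follows that $\beta$ equals some $\gamma_k^i$, i.e.\ $\varphi(\gamma)$ is a pants curve. If the boundary component is $\infty$, which is a puncture, the curve would be peripheral; that is excluded because $\gamma$ is not homotopic to a loop around $\infty$.

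\textbf{Main obstacle.} I expect the main difficulty to be the uniform collar lower bound in the crossing argument. One must be careful that the bound $L$ holds for all depths $k$, including small ones; this uses $\inf_n q_n\geq\delta$. One must also check that the standard collar lemma applies on this infinite type surface. This is fine because the canonical decomposition consists of geodesic pants, so each pants geodesic has an embedded collar of the standard width.
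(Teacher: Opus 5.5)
Your proof is correct and follows essentially the same route as the paper's. Wolpert's lemma together with $\ell_{X(\omega)}(\gamma_d)\to 0$ makes $[\varphi(\gamma_d)]$ shorter than a uniform lower bound on the collar widths of all pants geodesics (the paper's $\delta(\omega)=\inf_d W(L(d))$ plays the role of your bound derived from $L$), so it crosses no pants geodesic and must itself be one; you additionally spell out the pants-interior and peripherality-at-$\infty$ step that the paper leaves implicit.
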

\begin{proof}
For each $d\in \mathbb N$, we put
 \begin{equation*}
 	L(d):=\sup_{\gamma\subset V_d} \ell_{X(\omega)}(\gamma),
 \end{equation*}
 where the supremum is taken over all pants geodesics $\gamma$ in $V_d$.
 
 Since $\lim_{n\to\infty}q_n=1$, we have $\lim_{d\to\infty}L(d)=0$ from Proposition \ref{Pro:Length to 0}.
 The well-known collar lemma (cf. \cite{Buser}) tells us that there exists $W(\ell)\in \mathbb R_{>0}$ depending only on $\ell\in \mathbb R_{>0}$ such that every simple closed geodesic of length $\ell$ has a collar of width $W(\ell)$ and $W(\ell)\to +\infty$ as $\ell\to 0$.
 Therefore, $\delta (\omega):=\inf_{d\in \mathbb N} W(L(d)) >0$.
 So, we may find $d(K)\in \mathbb N$ such that $KL(d(K))<\delta (\omega)$.
 
 On the other hand, from Wolpert's lemma (Proposition \ref{Pro:Wolpert}), for a pants geodesic $\gamma$ of depth $d\geq d(K)$ we have
 \begin{equation*}
 	\ell_{X(\omega)}([\varphi (\gamma)])\leq K\ell_{X(\omega)}(\gamma)\leq KL(d)<\delta (\omega),
 \end{equation*}
 where $[\varphi (\gamma)]$ is the geodesic homotopic to $\gamma$.
 This means that $[\varphi (\gamma)]$ cannot pass through any collar of a pants geodesic.
 Therefore, we conclude that $\varphi (\gamma)$ has to be a pants curve if the depth is not less than $d(K)$. 
\end{proof}

 {\bf The construction of the homomorphism $\Theta$.}
 
 First, we construct $\Theta ([\varphi])$ for $[\varphi]\in \textrm{Mod}(X(\omega))^{OP}$.
 Let $\varphi$ be a $K$-quasiconformal mapping.
 From Lemma \ref{lemma: Key}, $\varphi (\gamma_d^j)$ is a pants curve if $d\geq d(K)$ and $j\in \{1, 2, \dots , 2^d\}$.
 
 Let $Y_{\varphi}$ be an unbounded domain with $\partial Y_{\varphi}=[\varphi ( \gamma_{d(K)}^1)]\cup [\varphi ( \gamma_{d(K)}^2)]\cup \dots \cup [\varphi(\gamma_{d(K)}^{2^{d(K)}})]$, where $[\varphi (\gamma_{d(K)}^j)]$ is the hyperbolic geodesic homotopic to $\varphi (\gamma_{d(K)}^j)$ $(j\in \{1, 2, \dots , 2^{d(K)}\} )$.
 Then, both $W_{d(K)}$ and $Y_{\varphi}$ are unbounded domains bounded by pants geodesics and they do not depend on the choice of a representative in the homotopy class of $[\varphi]$.
 It follows from Proposition \ref{Pro:Ordered rooted binary tree} that $D_{[\varphi]}^{d(K)}=\iota_C(T_{W_{d(K)}})$ and $R_{[\varphi]}^{d(K)}:=\iota_C(T_{Y_{[\varphi]}})$ are ordered rooted binary trees with the same number of leaves.
 We put a number $j$ to the leaves $\iota_C (\varphi (\gamma_{d(K)}^j))$ and $\iota_C(\gamma_{d(K)}^j)$ for each $j$ $(j\in \{1, 2, \dots , 2^{d(K)}\})$.
 Since $[\varphi]$ is in $\textrm{Mod}(X(\omega))^{\textrm{OP}}$, the tree diagram $(D_{[\varphi]}^{d(K)}, R_{[\varphi]}^{d(K)})$ is order preserving and it gives an element of $F$ from Proposition \ref{Pro:Tree for F}.
 
 Furthermore, the element does not depend on the choice of the depth $d\geq d(K)$.
 Namely, we have the same element as $(D_{[\varphi]}^{d(K)}, R_{[\varphi]}^{d(K)})$ from $(D_{[\varphi]}^d, R_{[\varphi]}^d)$ for $d> d(K)$.
 Indeed, we may take $d(K)+1$ instead of $d(K)$. Then  $\gamma_{d(K)+1}^{2j-1}, \gamma_{d(K)+1}^{2j}\in \partial W_{d(K)+1}$ and $\gamma_{d(K)}^j\in \partial W_{d(K)}$ are boundary curves of  a pair of pants $P_{d(K)}^j$.  
  Also, $\varphi (\gamma_{d(k)+1}^{2j-1}), \varphi (\gamma_{d(K)+1}^{2j})$ and $\varphi (\gamma_{d(K)}^j)$ are pants curves.
  Therefore, we see that the tree diagram $(D_{[\varphi]}^{d(K)+1}, R_{[\varphi]}^{d(K)+1})$ determines the same element in $F$ as $(D_{[\varphi]}^{d(K)}, R_{[\varphi]}^{d(K)})$ does, which is denoted by $\Theta ([\varphi])$.
  
  Thus, we construct a well-defined map $\Theta : \textrm{Mod}(X(\omega))^{\textrm{OP}}\to F$.
  In fact, we may see that for any domain $W\supset W_{d(K)}$ bounded by pants curves with $W\cap E(\omega)=\emptyset$, the tree diagram $(\iota_C (T_W), \iota_C (T_{\varphi (W)}))$ determines $\Theta ([\varphi])$.
  
  Next, we show that $\Theta$ is a homomorphism.
  Take $[\varphi_1], [\varphi_2]\in \textrm{Mod}(X(\omega))^{\textrm{OP}}$.
As we have constructed above, there exist order preserving tree diagrams $(D_{[\varphi_i]}^{d_i}, R_{[\varphi_i]}^{d_i})$ for $[\varphi_i]$ $(i=1, 2)$.

Take $d\in \mathbb N$ so large that $W_d\supset W_{d_2}$ and $\varphi_2 (W_d)\supset W_{d_1}$, where $d_i=d(K(\varphi_i))$ $(i=1, 2)$.
Then, we obtain an order preserving tree diagram $(D_{\varphi_1\circ \varphi_2}^d, R_{\varphi_1\circ \varphi_2}^d)$ for $\varphi_1\circ \varphi_2$ which determines $\Theta ([\varphi_1]\circ [\varphi_2])$.
On the other hand, we have $\Theta ([\varphi_2])$ from $\varphi_2|_{W_d} : W_d\to \varphi_2 (W_d)$.
Moreover, since $W_{d_1}\subset \varphi_2 (W_d)$, we have $\Theta ([\varphi_1])$ from $\varphi_1|_{\varphi_2 (W_d)} : \varphi_2 (W_d) \to \varphi_1\circ\varphi_2 (W_d)$.
Hence, we verify that $\Theta ([\varphi_1\circ\varphi_2])=\Theta ([\varphi_1])\circ\Theta([\varphi_2])$ and $\Theta$ is a homomorphism.

We show that $\Theta$ is surjective on $\textrm{Mod}(X(\omega))^{\textrm{OP}}$.
Let $\Omega_0^1$ be an unbounded domain bounded by $C_1^1, C_2^3$ and $C_2^4$, and $\Omega_0^2$ an unbounded domain bounded by $C_2^1, C_2^2$ and $C_1^2$.
There exists a quasiconformal mapping $\phi_0 : \Omega_0^1\to \Omega_0^2$ with $\phi_0 (C_1^1)=C_2^1, \phi_0 (C_2^3)=C_2^2$ and $\phi_0 (C_2^4)=C_1^2$ (Figure \ref{Fig.Surjective}). 
\begin{figure}[htbp]
\center
\includegraphics[width=12cm]{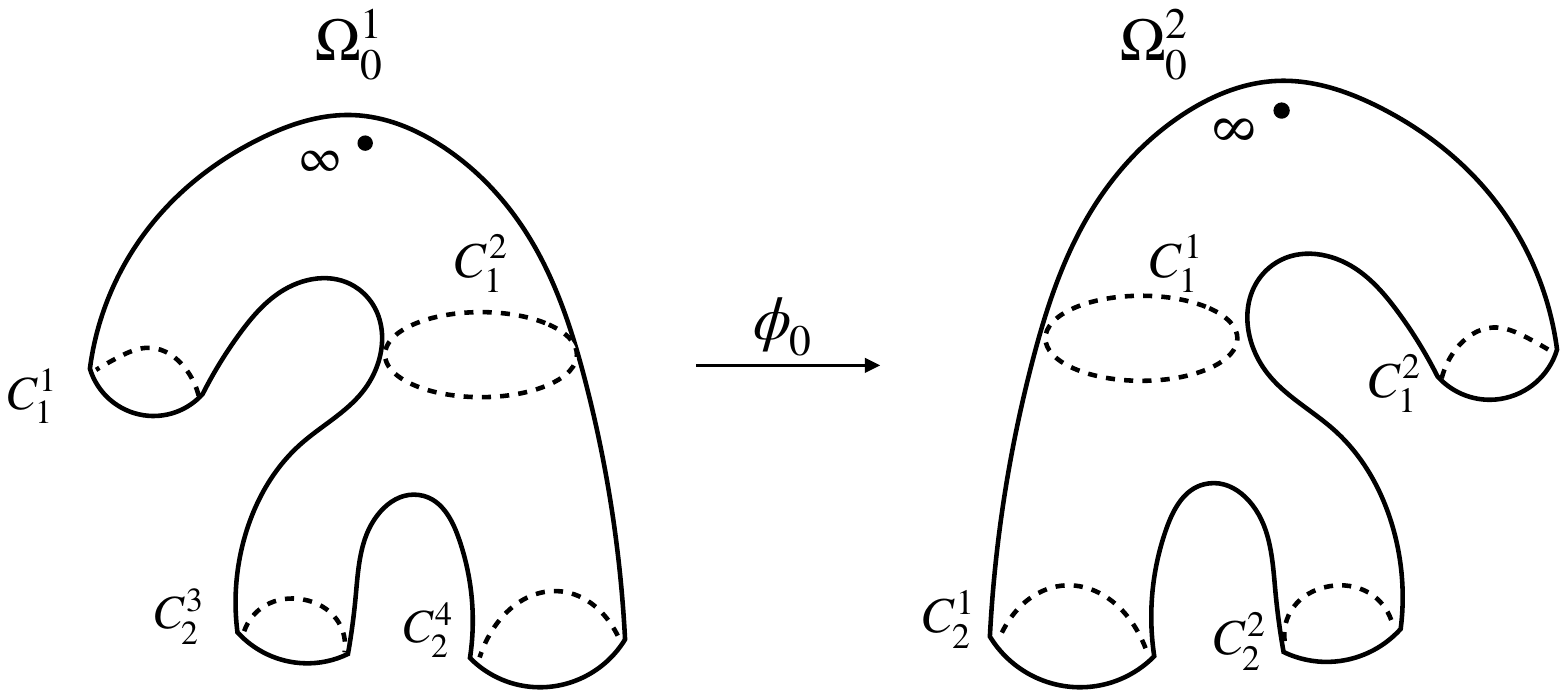}
\label{Fig.Surjective}
\caption{ }
\end{figure}
Moreover, as in Lemma \ref{lemma : Circle Stretching}, we may assume that for each $\theta\in [0, 2\pi)$,
\begin{eqnarray*}
	\phi_0 (c(1,1)+r(1)e^{i\theta})=c(2, 1)+r(2)e^{i\theta}, \\
	\phi_0 (c(2,3)+r(2)e^{i\theta})=c(2, 2)+r(2)e^{i\theta},
\end{eqnarray*}
and
\begin{equation*}
	\phi_0 (c(2,4)+r(2)e^{i\theta})=c(1, 2)+r(1)e^{i\theta}
\end{equation*}
hold.
We extend the quasiconformal mapping $\phi_0$ on $\Omega_0^1$ to a quasiconformal mapping on $X(\omega)$.

From Lemma \ref{lemma : Circle Stretching}, there exists a $K(\delta, M)$-quasiconformal mapping $\phi_{1, 1} : Q_1^1\to Q_2^1$ satisfying
\begin{equation*}
	\phi_{1,1}(c(1,1)+r(1)e^{i\theta})=c(2, 1)+r(2)e^{i\theta}
\end{equation*}
on $C_1^1$, where $K(\delta, M)(\geq 1)$ is a constant depending only on $\delta$ and $M$.
The mapping $\phi_{1,1}$ agrees with $\phi_0$ on $C_1^1$.
Hence, the quasiconformal mapping $\phi_0$ extends to a $K$-quasiconformal mapping from $\Omega_0^1\cup Q_1^1$ onto $\Omega_0^2\cup Q_2^1$, where $K=\max \{K(\phi_0), K(\delta, M)\}$ depending only on $\phi, \delta$ and $M$.
Repeating this process, we may construct a $K$-quasiconformal mapping $\Phi_0 : X(\omega)\to X(\omega)$.
Since $\phi_0$ keeps the order of boundary curves from left to right, the extended mapping $\Phi_0$ is order preserving.
 Therefore, we see that $[\Phi_0]\in \textrm{Mod}(X(\omega))^{\textrm{OP}}$.
From the construction of $\Theta$, we verify that $\Theta ([\Phi_0])=f_0\in F$ given in Example 2.1.

\begin{figure}[htbp]
	\center
	\includegraphics[width=12cm]{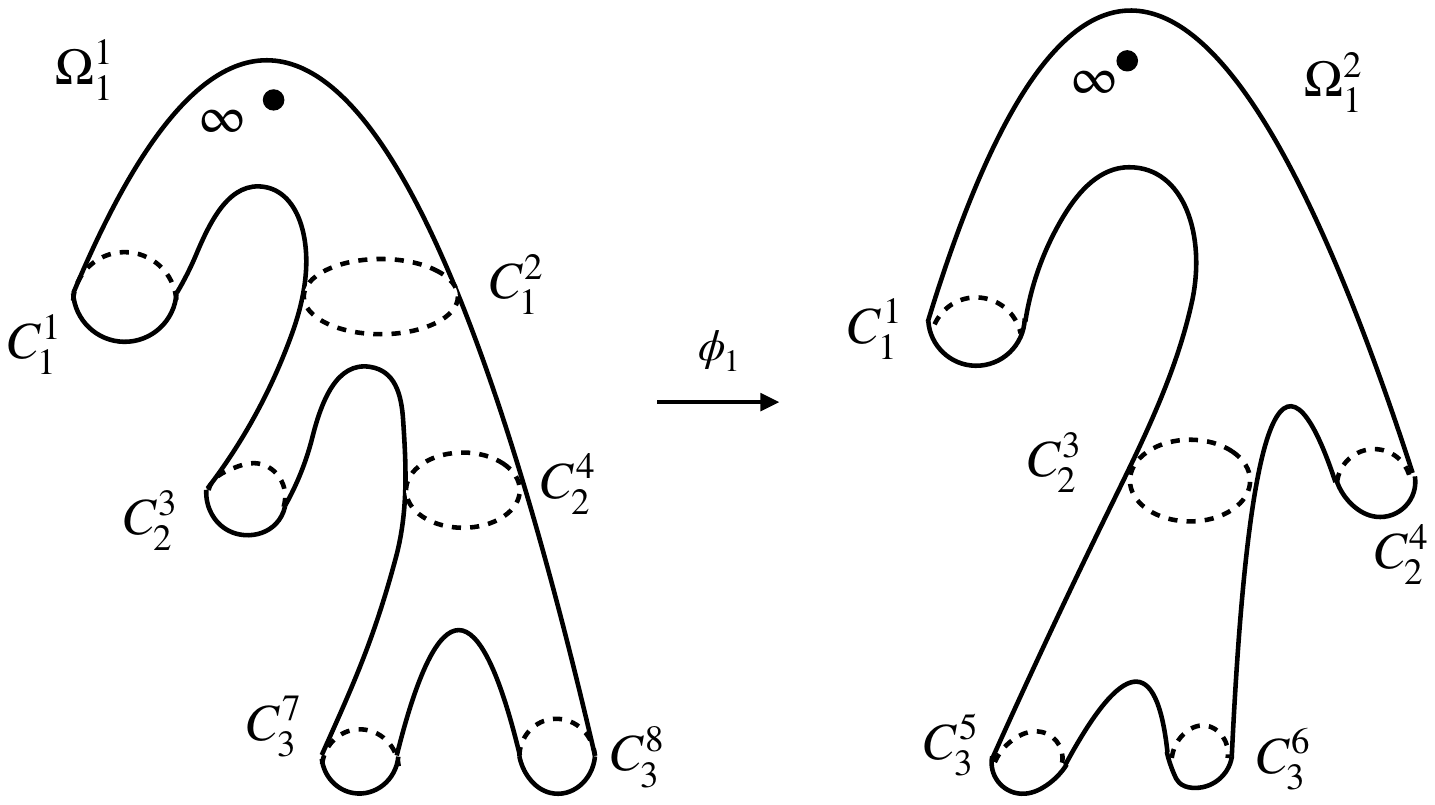}
		\caption{ }
	\label{Fig.Surjective2}
\end{figure}

As for $f_1$, we consider domains $\Omega_1^1, \Omega_1^2$ and a quasiconformal mapping $\phi_1 : \Omega_1^1 \to \Omega_1^2$ as in Figure \ref{Fig.Surjective2}.
By using the same argument as above, we may show that the quasiconformal mapping $\phi_1$ extends to an order preserving quasiconformal mapping $\Phi_1$ with $\Theta ([\Phi_1])=f_1$.

Since $\Theta$ is a homomorphism and the functions $f_0, f_1$ generate $F$ (Proposition \ref{Pro:Generator F}), we verify that $\Theta (\textrm{Mod}(X(\omega))^{\textrm{OP}})=F$ as desired.

Next, we consider the kernel $\textrm{Ker}(\Theta)$ of $\Theta$.
If $[\varphi]\in \textrm{Mod}(X(\omega))^{\textrm{OP}}$ belongs to $\textrm{Ker}(\Theta)$, then $\Theta ([\varphi])=id.\in F$.
This means that for a sufficiently large $k\in \mathbb N$, $D_{\varphi}^k=R_{\varphi}^k$.
Hence, we verify that $\varphi|_{E(\omega)}=id.$
Conversely, if $\Theta ([\varphi])\not=id.$, then for a sufficiently large $k\in \mathbb N$ there exists a component $\gamma$ of $\partial W_k$ such that the depth of $\varphi (\gamma)$ is not the same as the depth of $\gamma$.
Hence, we see that $E(\omega)_\gamma\not=E(\omega)_{\varphi (\gamma)}=\varphi (E(\omega)_\gamma)$ and $\varphi |_{E(\gamma)}\not=id.$
Hence, we verify that $\textrm{Ker}(\Theta)=\textrm{Mod}(X(\omega)_0)$ and we obtain the exact sequence
\begin{equation*}
	0\to \textrm{Mod}(X(\omega))_0\xrightarrow{\iota} \textrm{Mod}(X(\omega))^{\textrm{OP}}\xrightarrow{\Theta}F\to 0.
\end{equation*}

Now, we define $\Theta ([\varphi])$ for $[\varphi]\in \textrm{Mod}(X(\omega))^{\textrm{POP}}$ (and $\textrm{Mod}(X(\omega))^{\textrm{PO}}$).
Suppose that $\varphi : X(\omega)\to X(\omega)$ is a $K$-quasiconformal mapping.
From the definition of piecewise order preserving maps, there exist pants curves $\gamma_1, \dots , \gamma_N$ such that $E(\omega)=\sqcup_{i=1}^N E(\omega)_{\gamma_i}$ and $\varphi |_{E(\omega)_{\gamma_i}} : E(\omega)_{\gamma_i}\to E(\omega)_{\varphi ( \gamma_i)}$ is order preserving for each $i\in \{1, \dots , N\}$.

Let $\Omega_{\varphi}\subset\mathbb C$ be the unbounded domain bounded by $\gamma_1, \dots , \gamma_N$.
We may take $\Omega_{\varphi}$ so that $\Omega_{\varphi}\supset W_{d(K)}$.
Then, $\iota_{C}(T_{\Omega_{\varphi}}), \iota_C (T_{\varphi (\Omega_{\varphi})})$ are ordered rooted trees and by giving the correspondence of leaves via $\varphi$, we see that the tree diagram $(\iota_{C}(T_{\Omega_{\varphi}}), \iota_C (T_{\varphi (\Omega_{\varphi})}))$ determines an element in $V$, which is denoted by $\Theta ([\varphi])$.

Moreover, we see that $\Theta ([\varphi])$ is well-defined.
Indeed, since $\varphi |_{E(\omega)_{\gamma_i}} : E(\omega)_{\gamma_i}\to E(\omega)_{\varphi ( \gamma_i)}$ is order preserving for each $i\in \{1, \dots , N\}$, for any domain $\Omega\supset \Omega_{\varphi}$ bounded by pants curves with $\overline \Omega\cap E(\omega)=\emptyset$, the tree diagram $(\iota_C (T_{\Omega}), \iota_C (T_{\varphi (\Omega)}))$ determines $\Theta ([\varphi])\in V$.
We also see that the mappings $\Theta : \textrm{Mod}(X(\omega))^{\textrm{PO}}\to T$ and $\Theta : \textrm{Mod}(X(\omega))^{\textrm{POP}}\to V$ are well-defined homeomorphisms.

To show the surjectivity, it suffices to show that there exist quasiconformal mappings $\Phi_2, \Phi_3 : X(\omega)\to X(\omega)$ such that $\Theta ([\Phi_2])=f_2$ and $\Theta ([\Phi_3])=f_3$. 
\begin{figure}[htbp]
	\center
	\includegraphics[width=12cm]{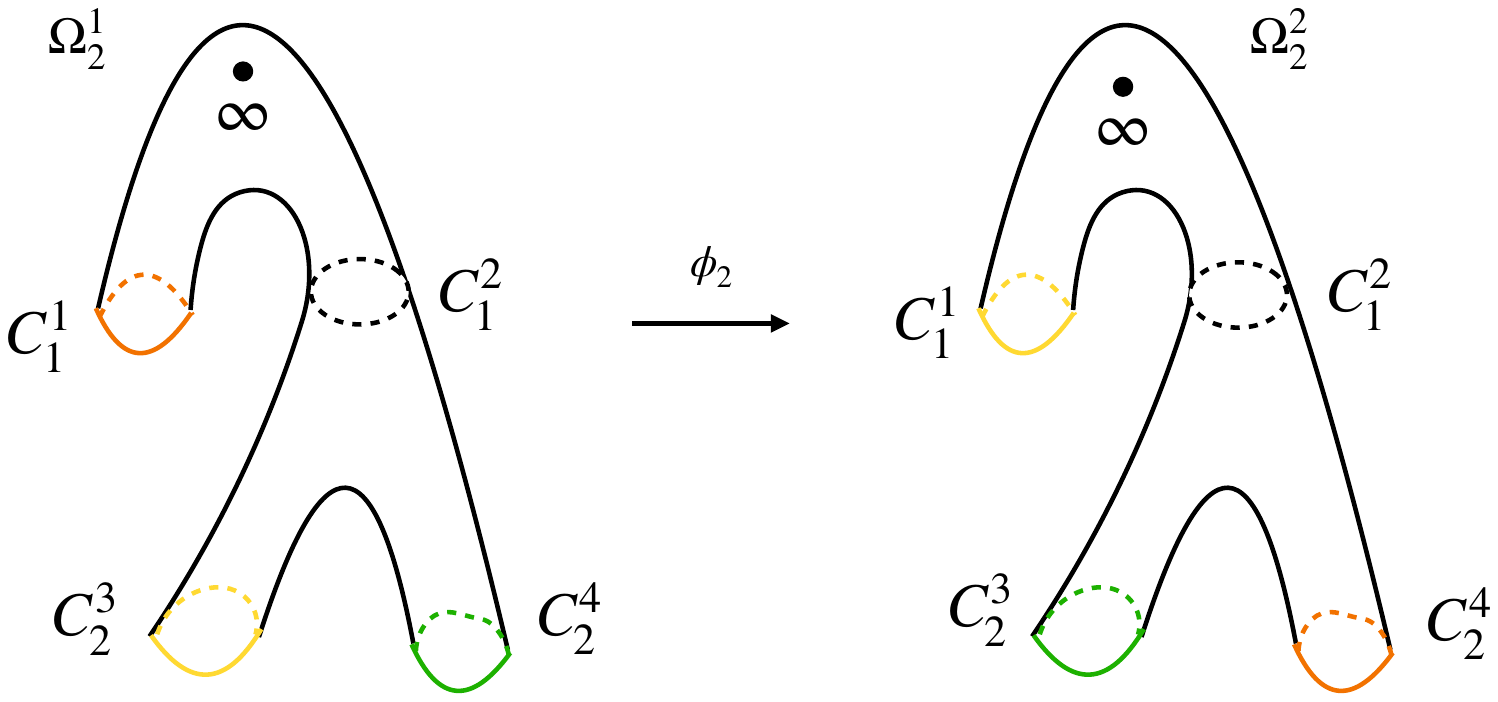}
		\caption{ }
	\label{Fig.SurjectiveT}
\end{figure}

To obtain $\Phi_2$, we consider a quasiconformal mapping $\phi_2 : \Omega_2^1\to \Omega_2^2$ with $\phi_2 (C_1^1)=C_2^4, \phi_2 (C_2^3)=C_1^1$ and $\phi_2 (C_2^4)=C_2^3$ as in Figure \ref{Fig.SurjectiveT}.
By using the same argument as in the case of $\phi_0$, the quasiconformal mapping $\phi_2$ extends to a quasiconformal self-mapping $\Phi_2$ of $X(\omega)$.
Since $\phi_2$ exchanges $C_1^1, C_2^3, C_2^4$ cyclically, the extended mapping $\Phi_2$ is of positive order.
Because of the construction of $\Theta$, we see that $\Theta ([\Phi_2])=f_2$.
Thus, we have $\Theta (\textrm{Mod}(X(\omega))^{\textrm{PO}})=T$ from Proposition \ref{Pro:Generator T}.

\begin{figure}[htbp]
	\center
	\includegraphics[width=12cm]{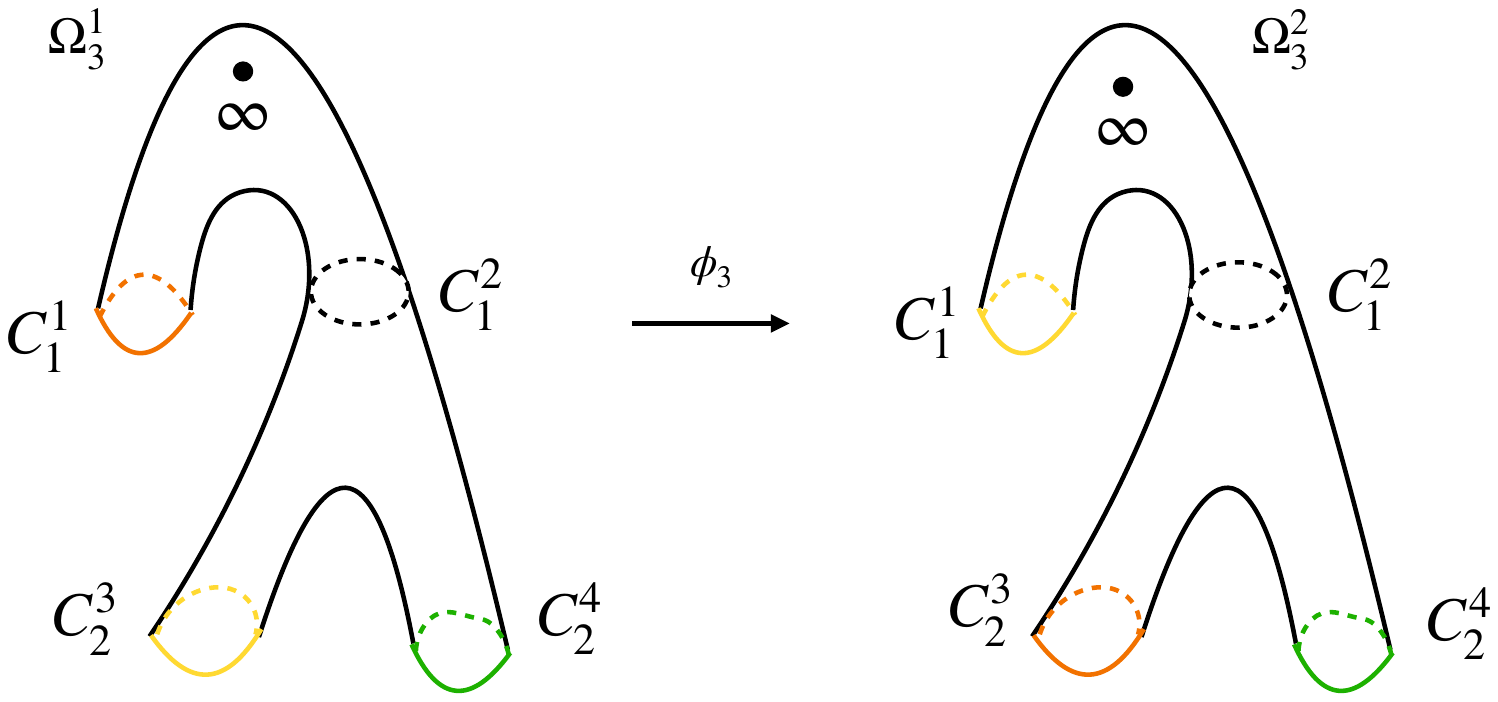}
		\caption{ }
	\label{Fig.SurjectiveV}
\end{figure}

To obtain $\Phi_3$, we consider a quasiconformal mapping $\phi_3 : \Omega_3^1\to \Omega_3^2$ with $\phi_3 (C_1^1)=C_2^3, \phi_3 (C_2^3)=C_1^1$ and $\phi_3 (C_2^4)=C_2^4$ as in Figure \ref{Fig.SurjectiveV}.
By using the same argument as in the case of $\phi_0$, the quasiconformal mapping $\phi_3$ extends to a quasiconformal self-mapping $\Phi_3$ of $X(\omega)$.
Since $\phi_3$ exchanges $C_1^1$ and $C_2^3$, the extended mapping $\Phi_3$ is piecewise order preserving.
Because of the construction of $\Theta$, we see that $\Theta ([\Phi_3])=f_3$.
Thus, we have $\Theta (\textrm{Mod}(X(\omega))^{\textrm{POP}})=V$ from Proposition \ref{Pro:Generator V}.

Since the kernel of $\Theta$ is still $\textrm{Mod}(X(\omega))_0$, we obtain exact sequences
\begin{eqnarray*}
	0\to \textrm{Mod}(X(\omega))_0\xrightarrow{\iota} \textrm{Mod}(X(\omega))^{\textrm{PO}}\xrightarrow{\Theta}T\to 0, \\
	0\to \textrm{Mod}(X(\omega))_0\xrightarrow{\iota} \textrm{Mod}(X(\omega))^{\textrm{POP}}\xrightarrow{\Theta}V\to 0.
\end{eqnarray*}
We complete the proof of Theorem \ref{Thm:Exact}.

\section{Proof of Theorem \ref{Thm:Isomorphism}}
Let $[\Phi_i] $ $(i=0, 1, 2, 3)$ be elements given in \S 4.
Since there exist $\varphi_i\in [\Phi_i]$ $(i=1, 2)$ such that $\varphi_i : X(\omega)\to X(\omega)$ are symmetric with respect to the real line $\mathbb R$, we see that $[\Phi_0], [\Phi_1]\in \textrm{Mod}(X(\omega))_F$, $[\Phi_2]\in \textrm{Mod}(X(\omega))_T$ and $[\Phi_3]\in \textrm{Mod}(X(\omega))_V$.
Hence, we have 
$$\Theta (\textrm{Mod}(X(\omega))_F)=F,\quad \Theta (\textrm{Mod}(X(\omega))_T)=T,
$$ and 
$$
\Theta (\textrm{Mod}(X(\omega))_V)=V.
$$
It suffices to show that $\Theta$ is injective on those subgroups of $\textrm{Mod}(X(\omega))$.

Let $[\varphi]$ be in $\textrm{Mod}(X(\omega))_F$ with $\Theta ([\varphi])=id.$
Then, $\varphi|_{E(\omega)}=id.$ and $\varphi (\gamma_d^j)$ are homotopic to $\gamma_d^j$ $(j=1, 2, \dots , 2^d)$ for a sufficiently large $d\in \mathbb N$.
We show that $\varphi (\gamma_{d-1}^j)$ is homotopic to $\gamma_{d-1}^j$ $(j\in \{1, 2, \dots , 2^{d-1}\})$.

We put $\gamma_{\varphi, d-1}^j:=[\varphi (\gamma_{d-1}^j)]$.
We show that $\gamma_{\varphi, d-1}^j=[\gamma_{d-1}^j]$.

Since $[J\circ \varphi]=[\varphi\circ J]$ and $J(\gamma_{d-1}^j)=\gamma_{d-1}^j$, we have
\begin{equation*}
	\gamma_{\varphi, d-1}^j=[\varphi (\gamma_{d-1}^j)]=[\varphi\circ J(\gamma_{d-1}^j)]=[J\circ\varphi (\gamma_{d-1}^j)].
\end{equation*}
Hence, $J(\gamma_{\varphi, d-1}^j)$ is homotopic to $J\circ J(\varphi (\gamma_{d-1}^j))=\varphi (\gamma_{d-1}^j)$.
However, $J$ is a hyperbolic isometry on $X(\omega)$. 
Therefore, $J(\gamma_{\varphi, d-1}^j)$ is still a geodesic and we conclude that $J(\gamma_{\varphi, d-1}^j)=[\varphi (\gamma_{d-1}^j)]=\gamma_{\varphi, d-1}^j$.
Namely, a simple closed geodesic $\gamma_{\varphi, d-1}^j$ is symmetric with respect to the real line $\mathbb R$.
Therefore, we have $\gamma_{\varphi, d-1}^j=(\gamma_{\varphi, d-1}^j\cap\overline {\mathbb H})\cup J(\gamma_{\varphi, d-1}^j\cap\mathbb H)$ and ${}^{\#}(\mathbb R\cap \gamma_{\varphi, d-1}^j)=2$ as it is a simple closed curve.
Since the simple closed geodesic surrounds $\gamma_d^{2j-1}$ and $\gamma_{d}^{2j}$, it has to be homotopic to $\gamma_{d-1}^j$ and we have $\gamma_{\varphi, d-1  }^j=\gamma_{d-1}^j$.
This implies that $\varphi (\gamma_{d-1}^j)$ is homotopic to $\gamma_{d-1}^j$ and
the mapping $\varphi$ keeps the homotopy class of a pants curve of depth $d-1$ fixed.

Continuing the above argument, we conclude that $\varphi$ keeps the homotopy class of every pants curve fixed.
Therefore, if $\varphi$ is not homotopic to the identity on $X(\omega)$, the mapping class $[\varphi]$ has to contain a Dehn twist with respect to some pants curve.
However, because of $[\varphi\circ J]=[J\circ\varphi]$, it does not occur.
So, we conclude that $[\varphi]\in \textrm{Mod}(X(\omega))_F\cap\textrm{Ker}(\Theta)$ is the identity and $\textrm{Mod}(X(\omega))_F$ is isomorphic to $F$ via $\Theta$.

The above argument also works for $\textrm{Mod}(X(\omega))_T$ and $\textrm{Mod}(X(\omega))_V$.
Thus, we verify that $\textrm{Mod}(X(\omega))_T$ and $\textrm{Mod}(X(\omega))_V$ are isomorphic to $T$ and $V$ via $\Theta$, respectively.

\section{Proof of Theorem \ref{Thm:Discrete Action}}
First, we show that $\textrm{Mod}(X(\omega))_F$ and $\textrm{Mod}(X(\omega))_T$ act properly discontinuously on $\textrm{Teich}(X(\omega))$.

Since $\textrm{Mod}(X(\omega))_F\subset \textrm{Mod}(X(\omega))_T$, it suffices to show that $\textrm{Mod}(X(\omega))_T$ acts properly discontinuously on $\textrm{Teich}(X(\omega))$.
The following proposition essentially shows the discreteness of the action of the group.

\begin{Pro}
\label{Pro:Key}
Suppose that $\omega$ satisfies the BRD condition.
	For $K\geq1$, we define a subset $\Delta (K)$ of $\textrm{Mod}(X(\omega))_T$ by
	\begin{equation*}
		\Delta (K)=\{[\varphi]\in \textrm{Mod}(X(\omega))_T \mid K([\varphi])<K\},
	\end{equation*}
	where 
	\begin{equation*}
		K([\varphi])=\inf \{K(\psi) \mid \psi\in [\varphi]\}.
	\end{equation*}
	Then, ${}^{\#}\Delta (K)<\infty$.
\end{Pro}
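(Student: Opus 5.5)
Fix $K\geq 1$ and take $d=d(K)$ from Lemma \ref{lemma: Key}. Every $[\varphi]\in\Delta(K)$ has a $K$-quasiconformal representative $\varphi$. By the construction of $\Theta$ in \S 4, $\Theta([\varphi])\in T$ is then determined by the positively ordered tree diagram $(D^{d}_{[\varphi]},R^{d}_{[\varphi]})$. Here $D^{d}_{[\varphi]}=\iota_C(T_{W_d})$ is the full tree of depth $d$, the same for every $[\varphi]\in\Delta(K)$, and $R^d_{[\varphi]}=\iota_C(T_{Y_\varphi})$ is the tree of the domain bounded by the geodesics $[\varphi(\gamma_d^j)]$. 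Since $\Theta$ is injective on $\textrm{Mod}(X(\omega))_T$ by Theorem \ref{Thm:Isomorphism}, it suffices to show that only finitely many range trees $R^d_{[\varphi]}$ can occur. Once the range tree is fixed, the leaf numbering of a positively ordered diagram is determined up to one of $2^d$ cyclic shifts, so there are only finitely many diagrams and hence finitely many values $\Theta([\varphi])$.

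To bound the range trees, I will bound the depth of the pants curves $\varphi(\gamma_d^j)$ from both sides. \emph{Lower bound on length.} From (\ref{eqn:Length Of I}), (\ref{eqn:Length Of J}) and the BRD condition, the round annulus separating $C_k^i$ from the adjacent intervals has modulus comparable to $\log(1/(1-q_k))$ up to constants depending on $\delta, M$. Hence the extremal length, and therefore the hyperbolic length, of the pants geodesic $\gamma_k^i$ satisfies
\[
\ell_{X(\omega)}(\gamma_k^i)\geq \frac{c}{\log(1/(1-q_{k}))}
\]
for some $c=c(\delta,M)>0$. For the upper bound I use Proposition \ref{Pro:Length to 0}. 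Wolpert's lemma (Proposition \ref{Pro:Wolpert}) gives $\ell([\varphi(\gamma_d^j)])\geq K^{-1}\ell(\gamma_d^j)\geq K^{-1}\min_{i}\ell(\gamma_d^i)>0$, a bound independent of $\varphi$. Since the lengths of pants geodesics of depth $k$ tend to $0$ as $k\to\infty$ (Proposition \ref{Pro:Length to 0} together with $q_n\to1$), the depth of each $[\varphi(\gamma_d^j)]$ is bounded by some $N(K)$. I only need this upper bound on depth: every range tree $R^d_{[\varphi]}$ is then an ordered rooted binary tree contained in the finite full tree of depth $N(K)$, and there are only finitely many such trees.

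This already finishes the argument, because $\Theta|_{\Delta(K)}$ takes finitely many values and is injective, so ${}^{\#}\Delta(K)<\infty$. The step I expect to be the main obstacle is making the depth bound uniform, namely showing that $\inf\{\ell(\gamma_k^i): k\leq d\}$ is positive and that $\sup\{\ell(\gamma): \gamma \text{ of depth }\geq N\}\to0$. The second statement is exactly Proposition \ref{Pro:Length to 0}. The first is trivial, since there are finitely many geodesics of depth at most $d$ and each has positive length, so the BRD lower estimate is not even needed. One more point needs checking: injectivity in Theorem \ref{Thm:Isomorphism} is proved via the $J$-symmetry, which is exactly why the statement restricts to $\textrm{Mod}(X(\omega))_T$ rather than all of $\textrm{Mod}(X(\omega))^{\textrm{PO}}$. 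Without that symmetry, the Dehn twists in $\textrm{Mod}(X(\omega))_0$ about short pants curves have dilatation bounded uniformly in the depth of the curve, so the corresponding set would be infinite.
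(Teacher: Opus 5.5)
Your proof is essentially the paper's: fix $d(K)$ from Lemma \ref{lemma: Key}, use Wolpert's lemma together with the decay of pants-geodesic lengths (Proposition \ref{Pro:Length to 0}) to confine each $[\varphi(\gamma_{d(K)}^j)]$ to a finite set independent of $\varphi$, and conclude from the injectivity of $\Theta$ on $\textrm{Mod}(X(\omega))_T$. The differences are cosmetic---the paper counts ordered tuples of candidate image geodesics using the two-sided Wolpert bound, while you use only the lower bound to cap the depth and then count range trees and cyclic leaf numberings---and your BRD-based lower length estimate is, as you note, unnecessary (as phrased it would not follow anyway, since a single round annulus around $\gamma_k^i$ only bounds its length from above).
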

\begin{proof}
	Let $[\varphi]\in \textrm{Mod}(X(\omega))_T$ be in $\Delta (K)$.
Then, there exists a quasiconformal mapping $\varphi : X(\omega)\to X(\omega)$ in $[\varphi]$ of positive order such that $K(\varphi)<K$.
Let $d(K)\in \mathbb N$ be the quantity given in Lemma \ref{lemma: Key}.
Then, the map $\varphi$ sends any pants curve of depth $d\geq d(K)$ to a pants curve, and the positively ordered tree diagram $(D_{[\varphi]}^{d(K)}, R_{[\varphi]}^{d(K)})$ for $[\varphi]$ obtained in \S 4 determines $\Theta ([\varphi])$.
Note that $D_{[\varphi]}^{d(K)}$ is determined by $\partial W_{d(K)}$, and $R_{[\varphi]}^{d(K)}$ is determined by $[\varphi (\gamma_{d(K)}^1)], [\varphi (\gamma_{d(K)}^2)], \dots , [\varphi (\gamma_{d(K)}^{2^{d(K)}})]$.

For each $d\in \mathbb N$, we set
\begin{equation*}
	M_{d}=\max_{j=1, 2, \dots , 2^{d}}\ell_{X(\omega)}(\gamma_{d}^j),
\end{equation*}
and
\begin{equation*}
	m_{d}=\min_{j=1, 2, \dots , 2^{d}}\ell_{X(\omega)}(\gamma_{d}^j).
\end{equation*}
Since $\lim_{d\to\infty}\ell_{X(\omega)}(\gamma_d^j)=0$, we see that
\begin{equation*}
	N(K):={}^\#\left \{(d, j)\in \mathbb N\times\mathbb N \mid \frac{1}{K}m_{d(K)}\leq \ell_{X(\omega)}(\gamma_d^j)\leq KM_{d(K)}\right \}<\infty.
\end{equation*}

On the other hand, it follows from Wolpert's lemma that
\begin{equation*}
	 \frac{1}{K}\ell_{X(\omega)}(\gamma_{d(K)}^j)\leq \ell_{X(\omega)}([\varphi (\gamma_{d(K)}^j)])\leq K\ell_{X(\omega)}(\gamma_{d(K)}^j).
\end{equation*}
Therefore, the number of possibilities of $[\varphi (\gamma_{d(K)}^j)]$ is at most $N(K)$ $(j\in \{1, 2, \dots , 2^{d(K)}\})$ and the number of possibilities of $R_{[\varphi]}^{d(K)}$ is at most $N(K)^{2^{d(K)}}$.
This means that the number of possibilities of $\Theta ([\varphi])$ for $[\varphi]\in \Delta (K)$ is at most $N(K)^{2^{d(K)}}$.
Since $\Theta : \textrm{Mod}(X(\omega))_T\to T$ is injective, we have ${}^{\#}\Delta (K)\leq N(K)^{2^{d(K)}}<\infty$.
\end{proof}

Now, we show that the action of $\textrm{Mod}(X(\omega))_T$ on $\textrm{Teich}(X(\omega))$ is properly discontinuous.
For $\varepsilon >0$ and $P=[X, f]\in \textrm{Teich}(X(\omega))$, we put
\begin{equation*}
	B(P, \varepsilon)=\{Q\in \textrm{Teich}(X(\omega)) \mid d_T(P, Q)<\varepsilon\}.
\end{equation*} 

Suppose that $\chi_{[\varphi]}(P)\in B(P, \varepsilon)$ for $[\varphi]\in \textrm{Mod}(X(\omega))_T$.
Then, we see that the inequality 
\begin{equation*}
	d_T(P_0, \chi_{[\varphi]}(P_0))\leq d_T(P_0, P)+d_T(P, \chi_{[\varphi]}(P))+d_T(\chi_{[\varphi]}(P), \chi_{[\varphi]}(P_0))
\end{equation*}
holds for $P_0=[X(\omega), id.]$.

Since $\chi_{[\varphi]}$ is an isometry with respect to the Teichm\"uller distance, we have
\begin{equation*}
	d_T (P_0, \chi_{[\varphi]}(P_0))\leq \varepsilon+2d_T(P_0, P).
\end{equation*}
By the definition of the Teichm\"uller distance, $d_T(P_0, \chi_{[\varphi]}(P_0))=\log K([\varphi])$.
Thus, we see that $[\varphi]\in \Delta (\exp \{\varepsilon+2d_T(P_0, P)\})$.
Therefore, it follows from Proposition \ref{Pro:Key} that the set
\begin{equation*}
	\{[\varphi]\in \textrm{Mod}(X(\omega))_T \mid \chi_{[\varphi]}(P)\in B(P, \varepsilon)\}
\end{equation*}
is finite.
We conclude that the action of $\textrm{Mod}(X(\omega))_T$ is properly discontinuous on $\textrm{Teich}(X(\omega))$.

\medskip

Next, we show that the action of $\textrm{Mod}(X(\omega))_V$ is not properly discontinuous.
To show this, it suffices to show that there exists a sequence $\{[\Psi_n]\}_{n=1}^{\infty}$ of distinct elements in $\textrm{Mod}(X(\omega))_V$ such that $d_T(P_0, \chi_{[\Psi_n]}(P_0))\to 0$ as $n\to \infty$.

We look at closed intervals $I_{n+1}^1, I_{n+1}^2$ and an open interval $J_n^1$.
Actually, $I_{n+1}^1=[0,  \frac{1-q_n}{2}|I_n^1|]$, $I_{n+1}^2=[\frac{1+q_n}{2}|I_n^1|, |I_n^1|]$ and $J_n^1=(|I_n^1|, \frac{1+q_n}{1-q_n}|I_n^1|)$.
Hence, annuli
\begin{eqnarray*}
	U_0^n&=&\left \{\frac{|I_n^1|}{2} <\left |z-\frac{|I_n^1|}{2}\right |<\frac{1+3q_n}{2(1-q_n)}|I_n^1|\right \}, \\
	U_1^n&=&\left \{\frac{1-q_n}{4}|I_n^1| <\left |z-\frac{1-q_n}{4}|I_n^1|\right |<\frac{1+q_n}{4}|I_n^1| \right \}
\end{eqnarray*}
and
\begin{equation*}
	U_2^n=\left \{\frac{1-q_n}{4}|I_n^1|<\left |z-\frac{3+q_n}{4}|I_n^1|\right |<\frac{1+q_n}{4}|I_n^1| \right \},
\end{equation*}
are contained in $X(\omega)$ (Figure \ref{Fig.POP}).
\begin{figure}[htbp]
\center
	\includegraphics[width=13cm]{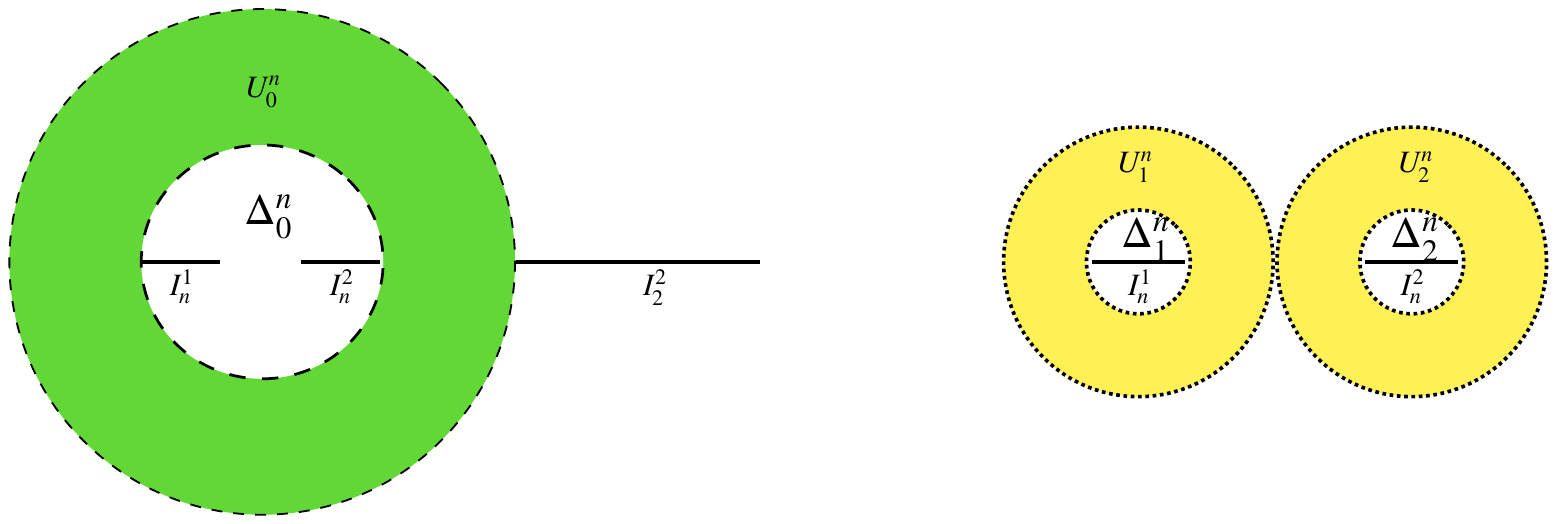}
	\caption{}
	\label{Fig.POP}
\end{figure}

We consider two quasiconformal mappings $\Psi_0^n$ and $\Psi_1^n$. 

The mapping $\Psi_0^n$ is a quasiconformal mapping which exchanges $I_n^1$ and $I_n^2$ by rotation on $\Delta_0^n$ and fixes any point in the complement of $U_0^n\cup \Delta_0^n$.
We may define the mapping $\Psi_0^n$ by
\begin{equation*}
	\Psi_0^n (z)=\begin{cases}
		z,&  z\in \mathbb C\setminus (U_0^n\cup\Delta_0^n) \\
		\frac{|I_n^1|}{2}+\left (z-\frac{|I_n^1|}{2}\right )\exp\{ \pi ir_0^n(z-\frac{|I_n^1|}{2})\},  & z\in U_0^n \\
		-z+|I_n^1|, & z\in \Delta_0^n,
	\end{cases}
\end{equation*}
where $\Delta_0^n=\left \{\left |z-\frac{|I_n^1|}{2}\right |<\frac{|I_n^1|}{2}\right \}$ and
\begin{equation*}
	r_0^n(z)=\frac{\frac{|z|}{|I_n^1|}-\frac{1+3q_n}{2(1-q_n)}}{\frac{1}{2}-\frac{1+3q_n}{2(1-q_n)}}.
\end{equation*}

The mapping $\Psi_0^n$ is not order preserving on $E(\omega)\cap (I_n^1\cup I_n^2)$. 
So, we apply a mapping $\Psi_1^n$ to $\Psi_0^n$ so that $\Psi_n:=\Psi_1^n\circ \Psi_0^n$ is order preserving on $E(\omega)\cap (I_n^1\cup I_n^2)$.
Actually, the mapping $\Psi_1^n$ gives a $\pi$-rotation on $I_n^j$ $(j=1, 2)$ while it keeps any points in $\mathbb C\setminus \cup_{i=1,2}(U_i^n\cup\Delta_i^n)$ fixed.

We may define the mapping $\Psi_1^n$ by
\begin{equation*}
	\Psi_1^n(z)=\begin{cases}
		z,& z\in \mathbb C\setminus (\cup_{k=1, 2}U_k^n\cup \Delta_k^n ) \\
		\frac{1-q_n}{4}|I_n^1|+\left (z-\frac{1-q_n}{4}|I_n^1|\right )\exp \{\pi i r_1^n(z-\frac{1-q_n}{4}|I_n^1|)\},& z\in U_1^n \\
		-z+\frac{1-q_n}{2}|I_n^1| ,& z\in \Delta_1^n, \\
		\frac{3+q_n}{4}|I_n^1|+\left (z-\frac{3+q_n}{4}|I_n^1|\right )\exp \{\pi i r_1^n(z-\frac{3+q_n}{4}|I_n^1|)\},& z\in U_2^n \\
		-z+\frac{3+q_n}{2}|I_n^1|,& z\in \Delta_2^n,
	\end{cases}
\end{equation*}
where $\Delta_1^n=\{|z-\frac{1-q_n}{4}|I_n^1||<\frac{1-q_n}{4}|I_n^1|\}$, $\Delta_2^n=\{|z-\frac{3+q_n}{4}|I_n^1||<\frac{1-q_n}{4}|I_n^1|\}$ and
\begin{equation*}
	r_1^n (z)=\frac{-\frac{4|z|}{|I_n^1|}+(1+q_n)}{2q_n}.
\end{equation*}

We see that $\Psi_1^n$ is a $\pi$-rotation on $\Delta_1^n$ and $\Delta_2^n$.
Hence, the mapping $\Psi_n =\Psi_1^{n}\circ \Psi_0^{n} : \mathbb C\to 
\mathbb C$ is a piecewise order preserving quasiconformal mapping.

From direct calculations or from $\lim_{n\to\infty}\textrm{mod}(U_0^n)=\lim_{n\to\infty}\textrm{mod}(U_1^n)=\lim_{n\to\infty}\textrm{mod}(U_2^n)=\infty$, we verify that $K(\Psi_0^{n}), K(\Psi_1^{n})\to 1$ as $n\to \infty$.
Hence, we are convinced that there exists a sequence $\{[\Psi_n]\}_{n=1}^{\infty}$ in $\textrm{Mod}(X(\omega))^{\textrm{POP}}$ such that $d_T(P_0, \chi_{[\Psi_n]}(P_0))\to 0$, as desired.

Thus, we complete the proof of Theorem \ref{Thm:Discrete Action}.

\section{Examples}

We have shown that $\textrm{Mod}(X(\omega))_T$ acts properly discontinuously on the Teichm\"uller space $\textrm{Teich}(X(\omega))$ if $\omega\in \Omega$ satisfies the BRD condition.
Since $\textrm{Mod}(X(\omega))_T$ is isomorphic to Thompson's group $T$ via $\Theta$, we may say that the action of Thompson's group $T$ on $\textrm{Teich}(X(\omega))$ is properly discontinuous.
In this section, we show that there are infinitely many different Teichm\"uller spaces of generalized Cantor sets where the action of Thompson's group $T$ is properly discontinuous.

From a result of our previous paper \cite{ShigaStructure}, we obtain the following.
\begin{Pro}
\label{Pro:Distinct}
	Suppose that $\omega=(q_n)_{n=1}^{\infty}$, $\omega'=(q_n')_{n=1}^{\infty}\in \Omega$ are increasing, that is, $q_{n}\leq q_{n+1}, q_{n}'\leq q_{n+1}'$ $(n\in \mathbb N)$.
	If $\lim_{n\to \infty}q_n=\lim_{n\to \infty}q_n'=1$ and
	\begin{equation*}
		\sup_{n\in \mathbb N}\left |  \log \frac{\log (1-q_n)}{\log (1-q_n')}\right |=\infty,
	\end{equation*}
	then $X(\omega)$ and $X(\omega')$ are not quasiconformally equivalent to each other.
	In particular, $\textrm{Teich}(X(\omega))\not=\textrm{Teich}(X(\omega'))$.
\end{Pro}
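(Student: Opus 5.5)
We argue by contradiction: suppose there is a $K$-quasiconformal mapping $\varphi : X(\omega)\to X(\omega')$. The aim is to show that $\varphi$ almost respects the two canonical pants decompositions. Comparing hyperbolic lengths of corresponding pants geodesics via Wolpert's lemma (Proposition \ref{Pro:Wolpert}) will then force $\log(1-q_n)$ and $\log(1-q_n')$ to be comparable, up to a bounded shift in depth. That contradicts the divergence hypothesis.

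\emph{Step 1: pants curves go to pants curves.} Since $q_n, q_n'\to 1$, Proposition \ref{Pro:Length to 0} gives that pants geodesics of large depth are uniformly short on both surfaces. The collar argument of Lemma \ref{lemma: Key} does not use that the target equals the source. So the same proof gives $d(K)$ such that $\varphi$ sends every pants curve of depth $d\ge d(K)$ in $X(\omega)$ to a pants curve of $X(\omega')$. The same holds for $\varphi^{-1}$. As in the construction of $\Theta$, $\varphi$ then induces a tree diagram between the two canonical binary trees. It maps a finite ordered rooted subtree onto another one, and below the leaves it is a rooted-tree isomorphism. Hence there is an integer shift $s$, bounded in terms of the finitely many leaf depths, such that for all large $d$, pants curves of depth $d$ correspond to pants curves of depths in $[d-s, d+s]$.

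\emph{Step 2: length asymptotics.} Next we need two-sided estimates of $\ell_{X(\omega)}(\gamma_d)$ in terms of $q_d$. The upper bound is Proposition \ref{Pro:Length to 0}. For the lower bound, I would use the round annuli around $I_d^j$ in the gap structure, together with the modulus/extremal-length comparison from \cite{ShigaStructure}. The expected outcome is $\ell_{X(\omega)}(\gamma_d)\asymp 1/\log\frac{1}{1-q_d}$, up to multiplicative constants depending only on $\delta$. Monotonicity of $q_n$ is used here, so that neighbouring depths do not distort the collar moduli. Wolpert's lemma applied to $\gamma_d$ and its image of depth $d'\in[d-s,d+s]$ then yields
\begin{equation*}
\frac{1}{C}\le \frac{\log (1-q_d)}{\log(1-q'_{d'})}\le C .
\end{equation*}
Because both sequences are increasing, $\log(1-q'_{d'})$ lies between $\log(1-q'_{d-s})$ and $\log(1-q'_{d+s})$. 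Applying the same estimate to $\varphi^{-1}$, we get the ratio at equal indices bounded as well. The final combination uses monotonicity and the bounded shift to pass from index $d'$ to index $d$. The lower bound on $d$ is harmless: the finitely many small $n$ contribute a finite quantity, since $q_n, q_n'\in(0,1)$. This gives $\sup_n|\log(\log(1-q_n)/\log(1-q_n'))|<\infty$, a contradiction.

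The main obstacle is Step 2: the lower length bound, and the passage from shifted indices to equal indices. Without a BRD-type hypothesis, monotonicity alone may not control $\log(1-q'_{d\pm s})$ against $\log(1-q'_d)$. If that fails, I would instead use the precise criterion of \cite{ShigaStructure}, from which the stated condition is taken. Finally, the \emph{in particular} part is immediate: Teichm\"uller spaces of quasiconformally inequivalent base surfaces are distinct.
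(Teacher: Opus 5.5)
The gap is the step you flag yourself: getting from the shifted index $d'=d+s_j$ to the equal index $d$. Put $a_n=-\log(1-q_n)$ and $b_n=-\log(1-q_n')$; both are nondecreasing. Your Steps 1--2 give $a_d\asymp b_{d+s_j}$ with $|s_j|\le s$, hence $b_{d-s}\lesssim a_d\lesssim b_{d+s}$. Running the argument for $\varphi^{-1}$ only produces the mirror statement $a_{d-s}\lesssim b_d\lesssim a_{d+s}$. That is the same information, not an equal-index bound. These inequalities genuinely do not suffice. Take $q_n'=q_{n+1}$ with $a_n$ growing so fast that $a_n/a_{n+1}\to 0$. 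Then all of them hold with $s=1$, yet $a_d/b_d\to 0$. What must be excluded is that every shift $s_j$ has the same strict sign (in this example all would equal $-1$). No comparison of lengths can exclude that.

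The missing idea is a counting identity. The curves $\varphi(\gamma^j_{d(K)})$, $j=1,\dots,2^{d(K)}$, are pants curves of $X(\omega')$ of depths $e_j$ bounding $\varphi(W_{d(K)})$. So they are the leaves of an ordered rooted binary tree (Proposition~\ref{Pro:Ordered rooted binary tree}), and $\sum_j 2^{-e_j}=1$. Since there are exactly $2^{d(K)}$ terms, this forces $e_{j_-}\le d(K)\le e_{j_+}$ for some $j_-,j_+$. For large $d$, a depth-$d$ descendant of $\gamma^{j_-}_{d(K)}$ is sent to depth $d'\le d$, so $a_d\le C b_{d'}\le C b_d$ by monotonicity of $(q_n')$. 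A descendant of $\gamma^{j_+}_{d(K)}$ gives $a_d\ge C^{-1}b_d$ in the same way. This is where monotonicity is really used; your length estimate does not need it, since it holds whenever $\inf q_n>0$.

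Note also that round annuli only give the upper bound on $\ell_{X(\omega)}(\gamma_d)$. The lower bound $\ell_{X(\omega)}(\gamma_d)\gtrsim 1/a_d$ needs an upper bound on the modulus of every annulus homotopic to $\gamma_d$. One way is Teichm\"uller's extremal ring separating the endpoints of $I_d^j$ from the nearest endpoint of its sibling interval and $\infty$, combined with a comparison between extremal and hyperbolic length.

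With these two points supplied, your outline becomes a self-contained proof. The paper gives no proof of this statement; it imports it from \cite{ShigaStructure}, which is also what your fallback amounts to.
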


We define functions $\log^{(1)} x$ on $\mathbb R_{>0}$ by $\log ^{(1)}x=\log x$.
Functions $\log ^{(k)}x$ $(k\in \mathbb N)$ are inductively defined by
\begin{equation*}
	\log^{(k+1)}x=\begin{cases}
		1, &0<x\le e_{k} \\
		\log \log^{(k)}x, &e_k<x
	\end{cases}
\end{equation*}
where $e_{k}$ is the positive number with $\log ^{(k)}e_k=e$.

We take $\omega_k=(q_n^{(k)})_{n=1}^{\infty}\in \Omega$ for each $k\in \mathbb N$ as
\begin{equation*}
	q_n^{(k)}=1-\frac{1}{2\log^{(k)}n}, n\in \mathbb N.
\end{equation*}
Then, $\omega_k$ are increasing $(k=1, 2, \dots)$ and if $k'>k$, then we have
\begin{equation*}
	\lim_{n\to \infty} \frac{\log (1-q_n^{(k')})}{\log (1-q_n^{(k)})}=\lim_{n\to \infty}\frac{\log ^{(k'+1)}n}{\log^{(k+1)}n}=\lim_{N\to\infty}\frac{\log^{(k'-k)}N}{N}=0.
\end{equation*}
It follows from Proposition \ref{Pro:Distinct} that $\textrm{Teich}(X(\omega_k))\not=\textrm{Teich}(X(\omega_{k'}))$ if $k\not=k'$.

On the other hand, it is not hard to see that each $\omega_k$ satisfies the BRD condition.
Thus, we verify that Thompson's group $T$ acts properly discontinuously on all $\textrm{Teich}
(X(\omega_k))$ $(k=1, 2, \dots )$.

\end{document}